\DeclareFontFamily{U}{rsfs}{} 
\DeclareFontShape{U}{rsfs}{n}{it}{<->
rsfs10}{} \DeclareSymbolFont{mscr}{U}{rsfs}{n}{it}
\DeclareSymbolFontAlphabet{\scr}{mscr}
\def\mathscr{\scr}
\newcommand{\dbar}{\bar{\partial}}
\def\C{\mathbb{C}}
\def\R{\mathbb{R}}
\def\Z{\mathbb{Z}}
\theoremstyle{plain}
\newtheorem{theorem}{Theorem}[section]
\newtheorem{proposition}[theorem]{Proposition}
\newtheorem{Corollary}[theorem]{Corollary}
\theoremstyle{definition}
\newtheorem{definition}[theorem]{Definition}
\newtheorem{remark}[theorem]{Remark}
\begin{document}

 
\title[Kapustin--Witten and nonabelian Hodge]{The Kapustin--Witten equations and nonabelian Hodge theory} 

\author{Chih-Chung Liu}
\address{C.-C. Liu: Department of Mathematics, National Cheng-Kung University, Tainan, 70101, Taiwan}
\email{cliu@mail.ncku.edu.tw}

\author{Steven Rayan}
\address{S. Rayan: Centre for Quantum Topology and Its Applications (quanTA) and Department of Mathematics \& Statistics, University of Saskatchewan, Saskatoon, SK, S7N 5E6, Canada}
\email{rayan@math.usask.ca}

\author{Yuuji Tanaka}
\address{Y. Tanaka: Department of Mathematics, Faculty of Science, Kyoto University
Kitashirakawa Oiwake-cho, Sakyo-ku, Kyoto, 606-8502, Japan}
\email{y-tanaka@math.kyoto-u.ac.jp}


\maketitle

\begin{abstract}
Arising from a topological twist of $\mathcal{N}=4$ super Yang--Mills theory are the Kapustin--Witten equations, a family of gauge-theoretic equations on a four-manifold parametrised by $t\in\mathbb{P}^1$. The parameter corresponds to a linear combination of two super charges in the twist.  When $t=0$ and the four-manifold is a compact K\"ahler surface, the equations become the Simpson equations, which was originally studied by Hitchin on a compact Riemann surface, as demonstrated independently in works of Nakajima and the third-named author.  At the same time, there is a notion of $\lambda$-connection in the nonabelian Hodge theory of Donaldson--Corlette--Hitchin--Simpson in which $\lambda$ is also valued in $\mathbb{P}^1$.  Varying $\lambda$ interpolates between the moduli space of semistable Higgs sheaves with vanishing Chern classes on a smooth projective variety (at $\lambda=0$) and the moduli space of semisimple local systems on the same variety (at $\lambda=1$) in the twistor space.  In this article, we utilise the correspondence furnished by nonabelian Hodge theory to describe a relation between the moduli spaces of solutions to the equations by Kapustin and Witten at $t=0$ and $t \in \R \setminus \{ 0 \}$ on a smooth, compact K\"ahler surface. We then provide supporting evidence for a more general form of this relation on a smooth, closed four-manifold by computing its expected dimension of the moduli space for each of $t=0$ and $t \in \R \setminus \{ 0 \}$.
\end{abstract}





\setcounter{tocdepth}{2}
\tableofcontents

\section{Introduction}

In this article, we consider a family of equations introduced by Kapustin and Witten \cite{KaWi} on a closed four-manifold with the aim of describing their relationship to a higher-dimensional instance of the nonabelian Hodge correspondence \cite{Simp2,Simp4}.  To set the stage for it, we let $G$ be a compact Lie group and $X$ a closed, oriented, smooth four-manifold, with $P \to X$ a principal $G$-bundle over $X$.  We fix a Riemannian metric on $X$. 
Furthermore, let $A$ be a connection on $P$, and let $\mathfrak{a}$ be a smooth section of the bundle $\text{ad} (P) \otimes \Lambda^1_{X}$, where $\Lambda^1_X := T^*X$. We say that such data $(A,\mathfrak a)$ is solution to the ($G$\emph{-)Kapustin-Witten equations} on $X$ for $t\in\mathbb P^1$ if they satisfy
\begin{gather}
( F_{A} - [\mathfrak{a} \wedge \mathfrak{a} ] + t d_{A} \mathfrak{a} )^{+} =0,  
\label{eq:pKW1i} \\
( F_{A} - [\mathfrak{a} \wedge \mathfrak{a} ] - t^{-1} d_{A} \mathfrak{a} )^{-} =0, 
\label{eq:pKW2i} \\
d_{A}^{*} \mathfrak{a} =0 , 
\label{eq:pKW3i}  
\end{gather}
where $\pm$ in the first two conditions are the projections to the self-dual and anti-self-dual parts of $\mbox{ad}(P)$-valued two-forms respectively, $F_A$ is the usual curvature of $A$, and $d_{A}^{*}$ is the formal adjoint of the covariant derivative $d_{A}$.  Here, $t$ is a parameter that indexes the family of equations \eqref{eq:pKW1i}--\eqref{eq:pKW3i}.  Since the equations \eqref{eq:pKW1i}--\eqref{eq:pKW3i} form an overdetermined system for $t \not\in \R  \cup \{ \infty \}$ and the equations for $t = \infty$ are the orientation-reversed equations for $t=0$, we restrict ourselves to the case $t \in \R$ in this article.

Taubes \cite{Taub2} examined the asymptotic behaviour of the system of the equations \eqref{eq:pKW1i}--\eqref{eq:pKW3i} for $t=0$ as a means of compactifying the moduli space of solutions. One may ask about the algebro-geometric counterpart of this construction, as it may yield a different but definitely useful approach to the problem as in Donaldson's theory or other gauge-theoretic moduli problems in even dimensions. To facilitate this question, it is appropriate to let $X$ be a compact K\"ahler surface.  In this case, the $t=0$ equations reduce to the Simpson equations (cf. \cite{Naka}, \cite{Tana}), which are a generalisation of the Hitchin equations \cite{Hitc}.  Thus, the moduli space for $t =0$ is, algebro-geometrically, that of semistable (or polystable) Higgs sheaves by a result of Simpson \cite{Simp1}.

The moduli space of semistable Higgs sheaves is one of the main objects in nonabelian Hodge theory. 
The other one is the moduli of semisimple local systems, that is, the space of representations of the fundamental group $\pi_1(X)$ via the Riemann--Hilbert correspondence (see e.g. \cite{GaRa}). 
The classical nonabelian Hodge theorem \cite{Simp2, Simp4, Simp5, Simp6} states that the moduli space of semistable Higgs sheaves with vanishing Chern classes and that of semisimple local systems are real-analytically isomorphic.

Furthermore, this can be realised by means of $\lambda$-connections, which are connections parametrized by $\mathbb{P}^1$ that interpolate between these two moduli spaces at $\lambda=0$ and $\lambda=1$, respectively, via the twistor space (see \cite{Simp2, Simp5, Simp6, Simp7}). 
The interpolation can be identified with hyperk\"ahler rotation, as the distinct complex structures that endow the underlying smooth manifold with the structure of the moduli space of semistable Higgs sheaves with vanishing Chern classes and that of semisimple local systems, respectively, are two complex structures in a hyperk\"ahler triple.  These structures are two points on a copy of $S^2$, which can be identified with the twistor line coordinatized by $\lambda$ (cf. \cite{Simp2,GaRa} for more on this point of view).

One may observe perhaps the situation in the Kapustin--Witten theory is similar to that of this nonabelian Hodge theory, especially on a compact K\"{a}hler surface, where the semistable Higgs moduli space is located at the origin of the parameter space $\mathbb{P}^1$ as mentioned above.  
Then one may wonder what the shape of the Kapustin--Witten equations on a compact K\"{a}hler surface for $t \neq 0$ would be and whether it could be related to nonabelian Hodge theory. 
In this article, we give answers to the questions. 
To describe them, let $X$ be a smooth, compact K\"{a}hler surface, and let $E$ be a Hermitian vector bundle on $X$ with Hermitian metric $k$. 
We denote by $\mathcal{A}_{(E,k)}$ the space of $k$-unitary connections on $E$ and by $\text{End} (E)_0 = \text{End} (E,k)_0$ the bundle of trace-free skew-Hermitian endomorphisms of $E$ on $X$. 
We decompose $T^* X \otimes \C$ into $\Lambda^{1,0}_X \oplus \Lambda^{0,1}_X$ by the complex structure of $X$.  
In Section \ref{Sec:4} of this article, we prove the following: 
\begin{proposition}[Proposition \ref{prop:tnzKW}]
Let $X$ and $E$ be as above.  
Suppose $c_1 (E) = c_2 (E)=0$. Then, the Kapustin--Witten equations for $t \in \R \setminus \{ 0 \}$ reduces to the following equations on $X$ for $(A, \phi) \in \mathcal{A}_{(E,k)} \times \Gamma ( \emph{End} (E)_0 \otimes \Lambda^{1,0}_X )$: 
\begin{gather}
F_{A}^{0,2}= 0, \, \bar{\partial}_{A} \phi  =  0, 
\, \phi \wedge \phi =0,
\label{eq:tnzKWJ1i}  \\
F_{A}^{1,1} + [ \phi \wedge \phi^{*} ] = 0, \, \partial_A \phi = 0 . 
\label{eq:tnzKWJ2i} 
\end{gather}
\end{proposition}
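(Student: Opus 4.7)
The plan is to use the Kähler structure on $X$ to decompose both $\mathfrak{a}$ and each Kapustin--Witten equation by Hodge type, and then to invoke the topological hypothesis $c_1(E)=c_2(E)=0$ via a Chern--Weil identity to separate the resulting mixed equations into the individual pointwise statements of \eqref{eq:tnzKWJ1i}--\eqref{eq:tnzKWJ2i}.

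To begin, I would set $\phi:=\mathfrak{a}^{1,0}\in\Gamma(\operatorname{End}(E)_0\otimes\Lambda^{1,0}_X)$; since $\mathfrak{a}$ takes values in skew-Hermitian endomorphisms, its $(0,1)$-part is forced to be $-\phi^*$, so that $\mathfrak{a}=\phi-\phi^*$. Substituting this into \eqref{eq:pKW1i}--\eqref{eq:pKW3i} and projecting by type using the Kähler splittings
\[
\Lambda^+_X\otimes\C=\Lambda^{2,0}_X\oplus\C\omega\oplus\Lambda^{0,2}_X,\qquad
\Lambda^-_X\otimes\C=\{\alpha\in\Lambda^{1,1}_X\otimes\C:\alpha\wedge\omega=0\},
\]
equation \eqref{eq:pKW1i} yields a pure $(2,0)$ relation
\[
F_A^{2,0}-[\phi\wedge\phi]+t\,\partial_A\phi=0,
\]
its $(0,2)$ conjugate, and a scalar $\omega$-trace relation coupling $F_A^{1,1}+[\phi\wedge\phi^*]$ and $\bar\partial_A\phi+\partial_A\phi^*$; equation \eqref{eq:pKW2i} contributes a primitive $(1,1)$ relation of the same shape but with a $-t^{-1}$ coefficient on the Higgs terms; and \eqref{eq:pKW3i} becomes a Coulomb-type condition $\partial_A^*\phi+\bar\partial_A^*\phi^*=0$.

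The decisive step is then to decouple these mixed relations. The hypothesis $c_1(E)=c_2(E)=0$ gives $\int_X\operatorname{tr}(F_A\wedge F_A)=0$ by Chern--Weil, which Hodge-type-expands to
\[
2\int_X\operatorname{tr}(F_A^{2,0}\wedge F_A^{0,2})+\int_X\operatorname{tr}(F_A^{1,1}\wedge F_A^{1,1})=0.
\]
I would substitute the type-projected KW equations into this integral and reorganise the resulting expression using the Bianchi identity $d_AF_A=0$, Stokes' theorem on the closed manifold $X$, and the Coulomb condition \eqref{eq:pKW3i}. After the cross terms are eliminated, the left-hand side should assemble into a manifestly non-negative $L^2$ quantity of the schematic form
\[
\|F_A^{0,2}\|^2+\|\phi\wedge\phi\|^2+t^2\|\partial_A\phi\|^2+\|F_A^{1,1}+[\phi\wedge\phi^*]\|^2+\cdots,
\]
whose vanishing forces each individual term to vanish pointwise, delivering \eqref{eq:tnzKWJ1i}--\eqref{eq:tnzKWJ2i}.

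The principal obstacle will be the cross-term accounting inside this Chern--Weil identity: mixed integrals such as $\int_X\operatorname{tr}([\phi\wedge\phi]\wedge\bar\partial_A\phi^*)$ and $\int_X\operatorname{tr}(F_A^{2,0}\wedge\bar\partial_A\phi^*)$ must be shown, via Bianchi, integration by parts, and \eqref{eq:pKW3i}, either to vanish outright or to recombine with the other cross terms into additional squares carrying the correct signs. The restriction $t\in\R\setminus\{0\}$ enters crucially at this stage: the strictly positive coefficient $t^2$ in front of $\|\partial_A\phi\|^2$ in the final sum-of-squares is precisely what produces the supplementary condition $\partial_A\phi=0$ absent from the Simpson system obtained at $t=0$.
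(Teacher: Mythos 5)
Your set-up is fine --- writing $\mathfrak a=\phi-\phi^*$ and decomposing the three equations by Hodge type using the K\"ahler splittings is exactly how the reduction begins --- but the decisive step of your plan fails, for a sign reason you cannot repair by ``cross-term accounting.'' The identity $\int_X\operatorname{tr}(F_A\wedge F_A)=0$ expands (up to universal constants) as $\|F_A^+\|_{L^2}^2-\|F_A^-\|_{L^2}^2=0$, so substituting the anti-self-dual equation \eqref{eq:pKW2i} necessarily injects \emph{negatively} signed pure squares (schematically $-t^{-2}\|(d_A\mathfrak a)^-\|^2$ and the negative of the primitive $(1,1)$ contributions); the obstruction to a manifestly non-negative sum sits in these pure squares, not in the mixed terms that Bianchi and Stokes can address. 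The sign mechanism that actually works --- this is Gagliardo--Uhlenbeck's argument, which the paper simply quotes as Theorem \ref{thm:GU} (extended to $U(r)$ using $c_1=c_2=0$) --- lives in a different integral: $\operatorname{tr}\bigl((F_A-[\mathfrak a\wedge\mathfrak a])\wedge d_A\mathfrak a\bigr)$ is exact by the Bianchi identity (it has a Chern--Simons-type primitive), so its integral vanishes with \emph{no} topological hypothesis, and under \eqref{eq:pKW1i}--\eqref{eq:pKW2i} it equals $-t\|(d_A\mathfrak a)^+\|^2-t^{-1}\|(d_A\mathfrak a)^-\|^2$. Since $t$ and $t^{-1}$ have the \emph{same sign}, this forces $d_A\mathfrak a=0$, and then the equations give $F_A-[\mathfrak a\wedge\mathfrak a]=0$ pointwise, i.e.\ the $t$-independent system \eqref{eq:tnzKW}. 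So the role of $t\in\R\setminus\{0\}$ is same-signedness of $t,t^{-1}$, not a positive coefficient $t^2$; and the supplementary equation $\partial_A\phi=0$ then falls out of the $(2,0)$-part of $d_A\mathfrak a=0$ directly, not from a $t^2\|\partial_A\phi\|^2$ term.

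Even granting the reduction to \eqref{eq:tnzKW}, your proposal is missing the second mechanism needed to decouple by type, which is where the paper's actual work lies. The $(2,0)$-part of $F_A-[\mathfrak a\wedge\mathfrak a]=0$ reads $F_A^{2,0}-\phi\wedge\phi=0$ and the $(1,1)$-part of $d_A\mathfrak a=0$ reads $\bar\partial_A\phi-\partial_A\phi^*=0$; neither splits into the separate conditions $F_A^{2,0}=0$, $\phi\wedge\phi=0$, $\bar\partial_A\phi=0$ by Bianchi, Stokes, and the Coulomb condition alone --- in particular the cross term $\int_X\langle F_A^{0,2},\phi^*\wedge\phi^*\rangle$ is not removable by integration by parts. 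The paper supplies the missing input via the Bochner--Weitzenb\"ock identity \eqref{eq:BW}: for a solution of \eqref{eq:tnzKW} the left-hand side vanishes, while the right-hand side is invariant under $\mathfrak a\mapsto i\mathfrak a$ (i.e.\ $\phi\mapsto i\phi$, which again gives a skew-Hermitian $i\phi+i\phi^*$), so $(A,i\phi)$ is \emph{also} a solution; comparing the type components of the equations for $(A,\phi)$ and $(A,i\phi)$ --- the quadratic term $\phi\wedge\phi$ flips sign while $F_A^{2,0}$ does not, and one obtains both $\bar\partial_A\phi-\partial_A\phi^*=0$ and $\bar\partial_A\phi+\partial_A\phi^*=0$ --- splits every coupled equation into its summands, yielding \eqref{eq:tnzKWJ1i}--\eqref{eq:tnzKWJ2i}. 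Your plan contains no substitute for this rotation/Weitzenb\"ock step, so as written it does not close; if you want an integral-identity proof, you need both the exact mixed integral above (for the Gagliardo--Uhlenbeck reduction) and a Weitzenb\"ock-type identity (for the type decoupling), not the single Chern--Weil identity $\int_X\operatorname{tr}(F_A\wedge F_A)=0$, whose vanishing is better viewed as a consistency consequence of flatness than as the engine of the proof.
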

Note that these are independent of $t$ as originally observed by Gagliardo and Uhlenbeck for closed four-manifolds \cite{GaUh}.

Surprisingly, the system \eqref{eq:tnzKWJ1i}, \eqref{eq:tnzKWJ2i} is equivalent to the one defining {\it pluri-harmonic metrics} on a Higgs bundle on $X$ in nonabelian Hodge theory --- our Section \ref{Sec:3} includes a brief account on pluri-harmonic metrics in the context of nonabelian Hodge theory.  Consequently, the moduli space of solutions to \eqref{eq:tnzKWJ1i}, \eqref{eq:tnzKWJ2i} can be naturally identified with that of semisimple local systems. Moreover, via nonabelian Hodge theory, we obtain the following:

\begin{theorem}[Theorem \ref{th:4}]
Let $X$ be a smooth, compact K\"{a}hler surface $X$, and let $E$ be a Hermitian vector bundle on $X$ with $c_1(E)=c_2 (E) =0$. 
Then the moduli space of solutions to the $t=0$ Kapustin--Witten equations \eqref{eq:tzKWJ1}, \eqref{eq:tzKWJ2} on $E$ over $X$ is real-analytically isomorphic (away from any singular loci) to the moduli space of solutions to the $t \neq 0$ Kapustin--Witten equations \eqref{eq:tnzKWJ1}, \eqref{eq:tnzKWJ2} on $E$ over $X$. 
\end{theorem}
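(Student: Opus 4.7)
The plan is to prove Theorem \ref{th:4} by assembling three identifications: first, the $t=0$ Kapustin--Witten moduli space gets identified with the moduli space of semistable Higgs sheaves with vanishing Chern classes; second, by Proposition \ref{prop:tnzKW} the $t \neq 0$ Kapustin--Witten moduli space gets identified with the moduli space of semisimple local systems; and third, the nonabelian Hodge correspondence of Donaldson--Corlette--Hitchin--Simpson gives a real-analytic isomorphism between these two moduli spaces. Hence the theorem follows by transitivity, with the isomorphism being the composition of the two gauge-theoretic identifications with the nonabelian Hodge map.

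More concretely, I would first invoke the results of Nakajima and of the third author (already cited in the introduction) to rewrite the $t=0$ Kapustin--Witten equations on the compact K\"ahler surface $X$ as the Simpson equations for a pair $(A, \varphi) \in \mathcal{A}_{(E,k)} \times \Gamma(\mathrm{End}(E)_0 \otimes \Lambda^{1,0}_X)$. By Simpson's higher-dimensional Hitchin--Kobayashi correspondence \cite{Simp1}, the moduli space of solutions modulo unitary gauge equivalence is real-analytically isomorphic to the moduli space $\mathcal{M}_{\mathrm{Dol}}(X,r)$ of semistable Higgs sheaves of rank $r = \mathrm{rank}\,E$ on $X$ with vanishing Chern classes, taken with the complex structure induced from $(\bar\partial_A, \varphi)$.

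Next, I would apply Proposition \ref{prop:tnzKW} to reduce, for any $t \in \R \setminus \{0\}$, the Kapustin--Witten equations to the system \eqref{eq:tnzKWJ1i}--\eqref{eq:tnzKWJ2i}. Here the key observation — which is the content of Section \ref{Sec:3} — is that this system is precisely the equation for $k$ to be a pluri-harmonic metric on the Higgs bundle $(E, \bar\partial_A, \phi)$ in the sense of Simpson. By Corlette's higher-dimensional pluri-harmonic theorem together with its Higgs-theoretic refinement by Simpson \cite{Simp2, Simp4}, solutions modulo unitary gauge equivalence are in bijective correspondence (as real-analytic spaces, away from the singular loci coming from reducible connections) with the moduli space $\mathcal{M}_{\mathrm{B}}(X,r)$ of semisimple rank-$r$ local systems, equivalently semisimple representations $\pi_1(X) \to \mathrm{GL}(r,\C)$ up to conjugation, via the associated flat connection $D = d_A + \phi + \phi^*$.

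Finally, the real-analytic isomorphism $\mathcal{M}_{\mathrm{Dol}}(X,r) \cong \mathcal{M}_{\mathrm{B}}(X,r)$ furnished by the nonabelian Hodge theorem of \cite{Simp2, Simp4, Simp5, Simp6} completes the chain. The main obstacle, and hence where I would spend the most care, lies in the third step above: ensuring that the notion of pluri-harmonic metric appearing on the Kapustin--Witten side coincides exactly with Simpson's pluri-harmonicity on a Higgs bundle, and that the gauge equivalence relations match in a way compatible with the nonabelian Hodge correspondence. The stability/semisimplicity conditions and the handling of strictly semistable loci must be tracked precisely, which is why the statement is phrased modulo singular loci. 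Once these technical matchings are made, the isomorphism is the composition of three real-analytic maps and inherits its regularity from them.
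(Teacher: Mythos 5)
Your proposal matches the paper's proof in both structure and substance: the paper likewise identifies the $t=0$ moduli space with polystable Higgs bundles via Hermitian--Yang--Mills metrics (Proposition \ref{Prop:mhym} together with Simpson's existence theorem), identifies the $t\neq 0$ moduli space with semisimple flat bundles via pluri-harmonic metrics (the proposition following Proposition \ref{prop:tnzKW}), and concludes by composing with Simpson's nonabelian Hodge correspondence, exactly as you outline. The only cosmetic deviation is your phrasing ``semistable Higgs sheaves'' where the paper's metric existence argument naturally produces \emph{polystable} Higgs bundles, a distinction absorbed by the ``away from singular loci'' caveat that both you and the paper invoke.
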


We expect that there would be a similar correspondence between the moduli spaces on a compact smooth four-manifold as well (in the absence of the K\"ahler geometry). However, there are a number of technical issues that must be overcome in the more general case, such as issues of smoothness and transversality of the moduli spaces. That said, we do prove:

\begin{theorem}[Corollary \ref{cor:vd}]
Consider a principal $G$-bundle $P$ over a closed, oriented, smooth four-manifold $X$, where $G$ is $SU(2)$ or $SO(3)$.
Then the expected dimensions of moduli spaces of solutions to the Kapustin--Witten equations for $t=0$ and $t \neq 0$ over $X$ coincide under a particular flatness condition (the vanishing of $k(G)$, as defined in Definition \ref{Defn:k(G)}).
\end{theorem}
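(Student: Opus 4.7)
The strategy is to identify each expected dimension with the Fredholm index of the associated elliptic deformation complex at a solution, and then to compute both indices via the Atiyah--Singer index theorem. For the $t\neq 0$ equations I would write the complex
\begin{equation*}
0 \to \Omega^0(\text{ad}(P)) \xrightarrow{d_0} \Omega^1(\text{ad}(P))\oplus \Omega^1(\text{ad}(P)) \xrightarrow{d_1^t} \Omega^{2,+}(\text{ad}(P))\oplus \Omega^{2,-}(\text{ad}(P))\oplus \Omega^0(\text{ad}(P)) \to 0
\end{equation*}
obtained by linearizing \eqref{eq:pKW1i}--\eqref{eq:pKW3i} at a solution $(A,\mathfrak{a})$ and coupling with the infinitesimal gauge action $d_0(\xi)=(-d_A\xi,-[\mathfrak{a},\xi])$, and analogously for the $t=0$ equations (after suitable rescaling to accommodate the $t^{-1}$-term in \eqref{eq:pKW2i}). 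In both cases the fiberwise virtual rank is $\dim(\text{ad}(P))\cdot(1-8+7)=0$, so each expected dimension is purely a topological correction.

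Next, I would verify ellipticity of each complex and apply Atiyah--Singer to express the two indices as $\Z$-linear combinations of $\chi(X)$, $\sigma(X)$ and $p_1(\text{ad}(P))$ whose coefficients depend only on $\dim G$. For the $t\neq 0$ case, the Gagliardo--Uhlenbeck identification of the equations with a pair of ASD/SD conditions on two complex connections built from $(A,\mathfrak{a})$ reduces the computation to a known instanton-type index on the complexified bundle. For the $t=0$ case, a direct symbol calculation gives the analogous formula, reflecting the (Higgs-like) structure that reduces, on a K\"ahler surface, to the Simpson deformation complex whose index can be computed Dolbeault-theoretically.

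Comparing the two formulas, the difference reduces to a single characteristic number weighted by a group-theoretic constant depending only on $G$; this constant is precisely the invariant $k(G)$ of Definition \ref{Defn:k(G)}, so that the two expected dimensions coincide if and only if $k(G)=0$. For $G=SU(2)$ and $G=SO(3)$, the three-dimensional adjoint representation makes this comparison concrete, and the group-theoretic bookkeeping closes the argument.

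The main obstacle is handling the $t\to 0$ degeneration of \eqref{eq:pKW2i}, whose $t^{-1}(d_A\mathfrak{a})^-$ term is singular in the limit. One must verify that the naturally rescaled $t=0$ complex remains elliptic after coupling to the gauge-fixing condition $d_A^*\mathfrak{a}=0$, so that the Atiyah--Singer machinery applies and the comparison of indices is meaningful. A Hodge decomposition of $\Omega^1(\text{ad}(P))\oplus\Omega^1(\text{ad}(P))$ combined with a standard instanton-slice argument should suffice to confirm ellipticity and legitimize the topological calculation.
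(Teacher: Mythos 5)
Your overall strategy---elliptic deformation complexes at a solution for each value of $t$, indices via Atiyah--Singer---is indeed the paper's strategy, but there is a genuine gap on the $t\neq 0$ side, and it sits exactly where the hypothesis $k(G)=0$ must enter. The complex you display for $t\neq 0$ is the linearization of the original system \eqref{eq:pKW1i}--\eqref{eq:pKW3i} at fixed $t$, with target $\Lambda^+_X\oplus\Lambda^-_X\oplus\Lambda^0_X$. But for real $t\neq 0$ the substitution $u=a+t\alpha$, $v=a-t^{-1}\alpha$ (invertible, since $t+t^{-1}\neq 0$) transforms the principal symbol of the rolled-up operator $(a,\alpha)\mapsto\bigl((d_Aa+t\,d_A\alpha)^+,\,(d_Aa-t^{-1}d_A\alpha)^-,\,d_A^*\alpha,\,d_A^*a\bigr)$ into that of $(d_A^+,d_A^*)\oplus(d_A^-,d_A^*)$---which is literally the $t=0$ symbol. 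So the index of the complex you wrote is independent of $t\in\R$, equal to $-16k(G)+3\chi(X)$ for \emph{every} $t$, and your program would conclude that the expected dimensions coincide unconditionally; the announced conclusion ``the difference is a characteristic number which is precisely $k(G)$, hence coincidence iff $k(G)=0$'' cannot emerge from the pair of complexes you set up. Relatedly, describing $k(G)$ as ``a group-theoretic constant depending only on $G$'' is incorrect: by Definition \ref{Defn:k(G)} it is a characteristic number of the bundle $P$ (an instanton number), not an invariant of the group.

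What the paper actually does for $t\neq 0$ is different in a way that matters. It first invokes Gagliardo--Uhlenbeck (Theorem \ref{thm:GU}): for $t\in\R\setminus\{0\}$, solutions exist iff $k(G)=0$, and they satisfy the reduced system \eqref{eq:tnzKW}, $F_A-[\mathfrak{a}\wedge\mathfrak{a}]=0$, $d_A\mathfrak{a}=d_A^*\mathfrak{a}=0$. The deformation complex taken for $t\neq 0$ is therefore \eqref{eq:AHStn0}, with target $\Lambda^2_X\oplus\Lambda^0_X$, not $\Lambda^+_X\oplus\Lambda^-_X\oplus\Lambda^0_X$; its index is computed not by an ``instanton-type'' formula on the complexified bundle but by using the Hodge star to identify the rolled-up operator with the $\mathrm{ad}(P)$-twisted de Rham (Gauss--Bonnet) operator $\Lambda^1_X\oplus\Lambda^3_X\to\Lambda^0_X\oplus\Lambda^2_X\oplus\Lambda^4_X$, where the hypothesis $k(G)=0$ is used to reduce $\mathrm{ch}(\mathrm{ad}(P)\otimes\C)$ to $\dim\mathfrak{g}=3$, yielding $3\chi(X)$. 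Comparing with the $t=0$ index $-16k(G)+3\chi(X)$ gives coincidence precisely when $k(G)=0$. Finally, your ``main obstacle'' is a red herring: the paper never takes a $t\to 0$ limit, so there is no degeneration of the $t^{-1}(d_A\mathfrak{a})^-$ term to control---the $t=0$ equations \eqref{eq:tzKW} and their complex \eqref{eq:AHSt0} are written down directly and are manifestly elliptic. The genuine subtlety you needed to confront is not ellipticity in a limit but the choice of which complex governs the $t\neq 0$ moduli problem after the Gagliardo--Uhlenbeck reduction, and that is the step your proposal passes over.
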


This result can be taken as evidence in support of a more general correspondence, and it is then definitely interesting to study deformation spaces of solutions to the Kapustin--Witten equations for both $t=0$ and $t \neq 0$ on a closed four-manifold in an appropriate sense; and to compare the moduli spaces of them to obtain a certain form of correspondence indicated by nonabelian Hodge theory. A promising language for this is available in the derived geometry of Joyce \cite{Joyc1}, \cite{Joyc2}, given that the objects are topological in nature and so the standard metric perturbation method in gauge theory would not work nicely for them.  
Note that Pantev--To\"en--Vaqui\'e--Vezzosi \cite{PTVV} gives a $(-2)$-shifted symplectic structure to the moduli stack of perfect complexes of local systems on a compact, oriented, topological four-manifold $X$, which lives in the $t \neq 0$ side of the Kapustin--Witten theory. 
The orientation problem for the moduli space of solutions to the Kapustin--Witten equations for $t=0$ was solved in \cite[Theorem~4.9]{JTU}, and it is just straightforward to see that the same statement holds for the $t \neq 0$ case.

Lastly, one can study the $G$-Kapustin--Witten equations for the case of reductive, non-compact groups $G_{\C}$ such as $G_{\C}=GL(r,\C), SL(r,\C)$, and $PGL(r,\C)$. Given what is observed for Hitchin systems on curves \cite{HaTh,KaWi,DoPa}, it is compelling to ask whether moduli spaces of Kapustin--Witten solutions admit a fibration, in the sense of the Hitchin fibration \cite{Hitc2} (cf. \cite{Simp6, ChNg} for higher dimensions), that respects Langlands duality. In particular, one might compare the moduli spaces for $G_{\C}=SL(r,\C)$ moduli space and ${}^L G_{\C} = PGL(r, \C)$ and ask whether they are ``dual'' with respect to this fibration. Related to this hypothesis is the more general question of whether the moduli spaces Calabi--Yau in some appropriate sense (or even hyperk\"ahler), in which case the duality may have a physical interpretation (although we do not speculate on this).  To answer these questions, one must first define the appropriate fibration on the total spaces carefully. We leave this for future work.

To summarize, the structure of this article is as follows:
\begin{itemize}
\item Section \ref{Sec:2} describes basic properties of solutions to the Kapustin--Witten equations on closed four-manifolds. 
\item Section \ref{Sec:3} is a brief overview of classical nonabelian Hodge theory with a view to the application we have in mind. 
\item We study the Kapustin--Witten equations on a compact K\"{a}hler surface and describe their relationship to the nonabelian Hodge theory in Section \ref{Sec:4}. 
\item In Section \ref{Sec:5}, we compute the expected dimensions of the moduli spaces of solutions to both the $t=0$ and $t \neq 0$ Kapustin--Witten equations on a general smooth, closed four-manifold.   
\end{itemize}

\section{A family of equations in Kapustin--Witten theory}
\label{Sec:2}

Let $X$ be a closed, oriented, smooth four-manifold, and let $P \to X$ be a principal $G$-bundle over $X$, where $G$ is a compact Lie group.  
Fix a Riemannian metric on $X$. 
It induces the Hodge star operator $* : \Lambda^{p} T^* X \to \Lambda^{4-p} T^* X$ for $p = 0, \dots , 4$. Since $*^2 =1$ on $\Lambda^2_X := \Lambda^2 T^* X$, $\Lambda^2_X$ decomposes into  {\it self-dual} part  $\Lambda^+_X$ and {\it anti-self-dual} part $\Lambda^-_X$, corresponding to the eigenvalues $\pm 1$ respectively.  Accordingly, we have the decomposition $\text{ad}(P) \otimes \Lambda^2_X = \text{ad}(P) \otimes \Lambda^+_X \oplus \text{ad}(P) \otimes \Lambda^-_X$ .

From a topological twist of $\mathcal{N}=4$ super Yang--Mills theory, Kapustin and Witten \cite{KaWi} introduced 
the following family of equations parametrized by $t \in \mathbb{P}^1$ for a pair $(A, \mathfrak{a})$ consisting of a connection $A$ on $P$ and a section $\mathfrak{a}$ of the bundle $\text{ad} (P) \otimes \Lambda^1_{X}$:   
\begin{gather}
( F_{A} - [\mathfrak{a} \wedge \mathfrak{a} ] + t d_{A} \mathfrak{a} )^{+} =0,  
\label{eq:pKW1} \\
( F_{A} - [\mathfrak{a} \wedge \mathfrak{a} ] - t^{-1} d_{A} \mathfrak{a} )^{-} =0, 
\label{eq:pKW2} \\
d_{A}^{*} \mathfrak{a} =0 , 
\label{eq:pKW3}  
\end{gather}
where the superscripts $\pm$ denote the respective projections to the self-dual part $\Gamma ( \text{ad} (P) \otimes \Lambda^+_X )$ and the anti-self-dual one $\Gamma ( \text{ad} (P) \otimes \Lambda^-_X )$ respectively, and $d_{A}^{*}$ is the formal adjoint of the covariant derivative $d_{A}$ with respect to the $L^2$-inner product. 
As mentioned in Introduction, we restrict ourselves to the case $t \in \R $ since the equations are overdetermined for $t \not\in \R \cup \{ \infty \}$ and the equations for $t=\infty$ is the orientation-reversed one for $t=0$.  
For the case $t \in \R$, the equations \eqref{eq:pKW1}--\eqref{eq:pKW3} with a gauge fixing equation form an elliptic system.

\begin{remark}
The physical parameter on which the Kapustin--Witten theory depends is a combination of this twisting parameter $t$ and the coupling parameter coming from the coupling constant and the theta-angle in the theory.  See \S 3.5 of \cite{KaWi} for more details. 
\end{remark}

Kapustin and Witten \cite{KaWi} discovered an astonishing relationship between the geometric Langlands programme on a curve and the $S$-duality conjecture for $\mathcal{N}=4$ super Yang--Mills theory in dimension four via the topologically-twisted theory compactifed on the curve, which was further generalised to the ramified case by Gukov--Witten \cite{GuWi}.
It is worth noting that this relationship interacts with the hyperk\"ahler structure on the moduli space of semistable Higgs bundles over the curve. This interaction picks out distinguished triple branes (branes that are either symplectic or complex with regards to each complex structure) in the moduli space that are transformed under mirror symmetry, which itself is a form of global Langlands duality (see also Hausel--Thaddeus \cite{HaTh} and Donagi--Pantev \cite{DoPa}).  The classification of triple branes in semistable Higgs bundle moduli spaces is explored in \cite{BaSc1,BaSc2,BGPH,FGOP} for instance, and has been furthermore linked to Nakajima quiver varieties in \cite{FJM,RaSc}.   There are also emerging connections between Kapustin--Witten theory and multiplicative Hitchin systems that are explored in \cite{ElPe}.

In this article, however, the underlying four-manifold is not necessarily the product of two Riemann surfaces, and we describe nonabelian-Hodge-theoretic aspect of the theory on complex surfaces, rather than on curves, and discuss a further possible realisation of it on closed four-manifolds.

For $t =0$ the equations \eqref{eq:pKW1}--\eqref{eq:pKW3} become 
\begin{equation}
F_{A}^{+} - [\mathfrak{a} \wedge \mathfrak{a} ]^{+} =0 , \,  ( d_{A} \mathfrak{a} )^{-} =0, \,  d_{A}^{*} \mathfrak{a}=0 . 
\label{eq:tzKW} 
\end{equation}
Solutions to the above equations \eqref{eq:tzKW}, especially their asymptotic behaviours, were intensively studied by Taubes \cite{Taub2}. 

We now require the following definition:

\begin{definition}\label{Defn:k(G)} For a principal $G$-bundle $P$ over $X$, we define the integer $k(G)$ to be $c_2 ( P \otimes \C ) [X]$ if $G$ is $SU(2)$ and by $- \frac{1}{4} p_1 (P) [X]$ if $G$ is $SO(3)$.
\end{definition}

Gagliardo and Uhlenbeck \cite[Corollary 3.3]{GaUh} show that the equations \eqref{eq:pKW1}--\eqref{eq:pKW3} have a solution if and only if $k(G)=0$, and they are independent of $t$ for $t \in \R \setminus \{ 0 \}$. 
Namely, they prove:

\begin{theorem}[Gagliardo--Uhlenbeck]
Assume that the structure group $G$ is $SU(2)$ or $SO(3)$. 
Then the equations \eqref{eq:pKW1}--\eqref{eq:pKW3} have a solution if and only if $k(G)=0$. 
Furthermore, a pair $(A, \mathfrak{a})$ satisfies the equations \eqref{eq:pKW1}--\eqref{eq:pKW3} with $t \in \R \setminus \{ 0\}$ if and only if it satisfies the following equations: 
\begin{gather}
F_{A} - [\mathfrak{a} \wedge \mathfrak{a} ] =0, \, 
d_{A} \mathfrak{a} = d_{A}^{*} \mathfrak{a} =0 .  
\label{eq:tnzKW}
\end{gather}
\label{thm:GU}
\end{theorem}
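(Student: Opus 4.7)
The plan is to exploit a topological identity that pins down the two ``error terms'' $\Phi := F_A - [\mathfrak{a}\wedge\mathfrak{a}]$ and $D := d_A\mathfrak{a}$ appearing in \eqref{eq:pKW1}--\eqref{eq:pKW2}. In this notation, the first two Kapustin--Witten equations read $\Phi^+ + tD^+ = 0$ and $\Phi^- - t^{-1}D^- = 0$, and the main task is to show that for every $t \in \R \setminus \{0\}$ each of $D^\pm$ (and hence $\Phi^\pm$) must vanish individually. Once this is established, the $t$-independent system \eqref{eq:tnzKW} follows at once; the converse implication, that any pair satisfying \eqref{eq:tnzKW} satisfies \eqref{eq:pKW1}--\eqref{eq:pKW3} for every $t$, is trivial.

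The heart of the argument is the integral identity
$$\int_X \mathrm{tr}(\Phi \wedge D) = 0,$$
which is purely topological. To see this, one expands $\Phi = F_A - [\mathfrak{a}\wedge\mathfrak{a}]$: the term $\int_X \mathrm{tr}(F_A \wedge d_A\mathfrak{a})$ vanishes by Stokes combined with the Bianchi identity $d_A F_A = 0$, while $\int_X \mathrm{tr}([\mathfrak{a}\wedge\mathfrak{a}] \wedge d_A\mathfrak{a})$ is, up to a numerical factor, the integral of the exact form $d\,\mathrm{tr}(\mathfrak{a}\wedge\mathfrak{a}\wedge\mathfrak{a})$ (the same Chern--Simons type manipulation that shows $\int_X \mathrm{tr}(F_{A'}^2) = \int_X \mathrm{tr}(F_A^2)$ for $A' = A + \mathfrak{a}$), and so also vanishes on the closed manifold $X$. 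The main technical hazard here is the sign and coefficient bookkeeping coming from graded commutators and the cyclicity of trace for forms of mixed degree; the computations themselves are routine but easy to bungle.

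To extract a pointwise conclusion, I would use that for $\mathrm{ad}(P)$-valued $2$-forms $\alpha,\beta$, with respect to the positive-definite pairing $-\mathrm{tr}(\,\cdot\,)$ on the Lie algebra, one has $\int_X \mathrm{tr}(\alpha\wedge\beta) = -\langle\alpha^+,\beta^+\rangle_{L^2} + \langle\alpha^-,\beta^-\rangle_{L^2}$. Applying this to $\Phi \wedge D$ and substituting $\Phi^+ = -tD^+$ and $\Phi^- = t^{-1}D^-$ from the Kapustin--Witten equations yields
$$0 = \int_X \mathrm{tr}(\Phi \wedge D) = t \|D^+\|^2_{L^2} + t^{-1}\|D^-\|^2_{L^2}.$$
Since $t$ and $t^{-1}$ have the same sign for every $t \in \R \setminus \{0\}$, both terms on the right must vanish, forcing $D = 0$. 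The Kapustin--Witten equations then immediately imply $\Phi^\pm = 0$, i.e. $F_A = [\mathfrak{a}\wedge\mathfrak{a}]$, which together with the gauge condition $d_A^*\mathfrak{a} = 0$ is precisely \eqref{eq:tnzKW}. This establishes both the $t$-independence of the equations and the equivalence with \eqref{eq:tnzKW}.

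Finally, the existence statement is an easy consequence of the same ingredients. If a solution exists, then by the argument above $F_A = [\mathfrak{a}\wedge\mathfrak{a}]$, so $\int_X \mathrm{tr}(F_A \wedge F_A) = \int_X \mathrm{tr}(\mathfrak{a}\wedge\mathfrak{a}\wedge\mathfrak{a}\wedge\mathfrak{a})$, and the latter vanishes pointwise by graded cyclicity of the trace on $1$-forms (the $1$-form $\mathfrak{a}$ and the $3$-form $\mathfrak{a}\wedge\mathfrak{a}\wedge\mathfrak{a}$ anti-commute under trace). By Chern--Weil, $\int_X \mathrm{tr}(F_A\wedge F_A)$ is a nonzero multiple of $k(G)$ when $G$ is $SU(2)$ or $SO(3)$, so $k(G) = 0$. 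For the reverse direction, when $k(G) = 0$ the topological classification for these structure groups lets one exhibit the trivial solution $(A,\mathfrak{a}) = (A_0, 0)$ with $A_0$ flat, completing the proof.
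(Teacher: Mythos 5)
Your argument is correct and is, in essence, the Gagliardo--Uhlenbeck argument that the paper itself merely cites rather than proves: your identity $\int_X \mathrm{tr}(\Phi\wedge D)=0$ is precisely the (vanishing) imaginary part of the Chern--Weil integral $\int_X \mathrm{tr}\bigl(F_{A+i\mathfrak a}\wedge F_{A+i\mathfrak a}\bigr)$ for the complexified connection $A+i\mathfrak a$, whose curvature is $\Phi+iD$, and your conclusion $t\,\|D^+\|_{L^2}^2 + t^{-1}\|D^-\|_{L^2}^2=0$, with $t$ and $t^{-1}$ of equal sign, is exactly the mechanism of their energy identity. The forward topological step (solution $\Rightarrow k(G)=0$ via graded cyclicity giving $\mathrm{tr}(\mathfrak a\wedge\mathfrak a\wedge\mathfrak a\wedge\mathfrak a)=0$) is likewise sound, and your reduction is robust against the sign ambiguities you flag, since any global sign flip in the pairing leaves both terms with a common sign.

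One small caveat concerning your final sentence: for $G=SO(3)$, $k(G)=-\tfrac14 p_1(P)[X]=0$ does \emph{not} imply that $P$ is trivial, since $SO(3)$-bundles over a four-manifold are classified by the pair $(w_2,p_1)$ and bundles with $w_2\neq 0$, $p_1=0$ exist (e.g. on $S^2\times S^2$). So ``the topological classification lets one exhibit the trivial solution'' only produces a solution on the trivial bundle; on a fixed bundle with $w_2\neq 0$ over a simply connected base no solution can exist even though $k(G)=0$, because $A+i\mathfrak a$ would then be a flat $PSL(2,\C)$-connection and would trivialise $P$. This looseness is inherited from the theorem's own phrasing, so the existence claim should be read as existence on \emph{some} $G$-bundle over $X$; with that reading your construction $(A_0,0)$ with $A_0$ flat completes the proof.
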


The clever trick used by Gagliardo and Uhlenbeck to prove the above theorem is to consider the curvature $F_{A + i \mathfrak{a}}$ of the ``complexified'' connection $A + i \mathfrak{a}$ out of a pair $(A, \mathfrak{a})$ which satisfies the Kapustin--Witten equations \eqref{eq:pKW1}--\eqref{eq:pKW3} with $t \in \R \setminus \{ 0\}$ and the use of their fascinating energy identity for it (see \cite[\S 3]{GaUh} for more details). 

Similar equations to \eqref{eq:tnzKW} on smooth three-manifolds were studied by Taubes \cite{Taub1} in the process of analytically describing the {\it  Morgan--Shalen compactification} of $SL(2, \C)$-character varieties of three-manifolds.

\begin{remark}
The above $t$-independency is not true when $X$ is not compact (see e.g. Gaiotto--Witten \cite{GaWi} which develops the programme by Witten \cite{Witt} aiming at ``categorifying'' the knot invariants by using the system of the equations \eqref{eq:pKW1}--\eqref{eq:pKW3} on a four-manifold with boundary). 
\end{remark}

\section{Nonabelian Hodge theory}
\label{Sec:3}

For general references on classical nonabelian Hodge theory, we refer the reader to \cite{GaRa, Huan, Moch2, Simp2,Simp4, Simp6, Simp7}. In order to employ the necessary elements of the correspondence, we will need to introduce some algebro-geometric language.  Also, we will take a more general point of view in which $X$ is not necessarily of real dimension four, at least initially.  As such, we let $X$ be a smooth, compact K\"{a}hler manifold of complex dimension $n$ with K\"ahler form $\omega$.  Let its holomorphic cotangent bundle be denoted by $\Omega_{X}^{1}$ and let $E$ be a smooth complex vector bundle on $X$.  We are interested in two types of structures on $E$: \emph{Higgs structures} and \emph{flat structures}.  The former structure is that of a Higgs bundle while the latter is that of a flat connection.  The former necessitates that $E$ is Hermitian while the latter is not necessarily unitary with respect to any Hermitian metric.

\subsection{Higgs bundles and Hermitian-Yang--Mills metrics}
\label{Sec:3.1}

Suppose now that $\dbar_E$ is a holomorphic structure on $E$. Then, a $\mathcal O_X$-linear, $\dbar_E$-holomorphic map $\theta : E \to E \otimes \Omega_{X}^1$ is called a \emph{Higgs field} for $(E,\dbar_E)$.  When the integrability condition $\theta\wedge\theta=0$ is satisfied, we say that the triple $(E , \dbar_{E}, \theta)$ is a {\it Higgs bundle} on $X$.

We define a notion of {\it degree} for $E$ by $\deg (E) := \int_X c_1 (E) \cdot \omega^{n-1}$. Following from this, the {\it slope} of $E$ is$$\mu(E):=\deg (E) /\text{rank}(E).$$We will use $\mathcal E$ to refer to the locally-free sheaf of holomorphic sections of $(E,\dbar_E)$.  We define the rank, degree, and slope of the Higgs bundle $(E,\dbar_E,\theta)$ to be those of the underlying vector bundle $E$ (or sheaf $\mathcal E$), respectively.  With these ideas in place, we can define \emph{stability}:

\begin{definition}
We say $(E,\dbar_E,\theta)$ is {\it semistable} if, for each nonzero coherent subsheaf $\mathcal F\subsetneq \mathcal E$ with the property $\theta(\mathcal F)\subseteq\mathcal F\otimes\Omega^1_X$ (i.e. with that property that it is $\theta$-invariant), then we have$$\mu (\mathcal F)\leq \mu (\mathcal E).$$We call $(E,\dbar_E,\theta)$ {\it stable} if we replace $\leq$ with $<$ in the preceding sentence.
\label{Def:stable}
\end{definition}

\begin{definition}
We call $(E,\dbar_E,\theta)$ {\it polystable} if it is a direct sum of stable Higgs bundles of the same slope.
\label{Def:poly}
\end{definition}

\begin{remark}
One needs to consider Gieseker-semistable Higgs sheaves in order to set up the moduli problem properly on a surface or a higher-dimensional variety. However, we do not do so, since our focus here is on the correspondence between analytic objects and algebro-geometric ones. 
We restrict attention in this article to bundles (locally-free sheaves) rather than more general coherent sheaves because (a) the Kapustin--Witten equations are naturally written on a smooth bundle and (b) the ensuing discussion of the existence of special metrics is most natural within the setting of bundles.
\end{remark}

Now, given a Higgs bundle $(E,\dbar_{E} ,\theta)$ on $X$, consider a Hermitian metric $h$ on $E$. The combination of this data induces a $(1,0)$-connection $\partial_{E, h}$ so that $D_{\bar{\partial}_{E}, h}^1 := \bar{\partial}_{E}+\partial_{E,h}$ is a unique $h$-unitary connection on $E$ compatible with $\bar{\partial}_{E}$, which is called the {\it Chern connection} associated to $(\dbar_E,h)$.  
Denote by $\theta^{*,h}$ the adjoint of the Higgs field with respect to the Hermitian metric $h$ on $E$.

\begin{definition}
A Hermitian metric $h$ on a Higgs bundle $(E , \dbar_{E} , \theta)$ is said to be a {\it Hermitian-Yang--Mills} one if $\Lambda 
( F_h + [ \theta , \theta^{*, h} ] )= c \mbox{Id}_E$, where $\Lambda$ is the $L^2$-adjoint of $\wedge \omega$, $F_h$ is the curvature of $D_{\bar{\partial}_{E}, h}^1$ and a constant $c$ is given by $- \frac{2 \pi i}{r (n-1)! \int_{X} \omega^n} \cdot \text{deg} (E)$. 
\label{Def:HYM}
\end{definition}

Simpson \cite{Simp1} proves the following by using the heat equation method of Uhlenbeck--Yau in the stable bundle case \cite{UhYa}.

\begin{theorem}[Simpson]
A Higgs bundle admits a Hermitian-Yang--Mills metric if and only if it is polystable. 
\end{theorem}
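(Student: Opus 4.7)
The plan is to prove the two directions separately, following the template of the Donaldson--Uhlenbeck--Yau theorem for ordinary holomorphic bundles, but keeping careful track of the Higgs field throughout.

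For the easier direction (HYM implies polystable), I would fix a Hermitian--Yang--Mills metric $h$ and a coherent $\theta$-invariant subsheaf $\mathcal F \subsetneq \mathcal E$. The idea is to pull back the metric to $\mathcal F$ (on the open locus where $\mathcal F$ is a subbundle, extended as an $L^2_1$ orthogonal projection $\pi \colon E \to E$ in the sense of Uhlenbeck--Yau) and use the Chern--Weil formula to express $\deg(\mathcal F)$ in terms of $\int_X \mathrm{tr}(\pi \Lambda F_h) \omega^n$ minus an $L^2$-norm term coming from $\bar\partial_E \pi$, plus an additional term $\|(1-\pi)\theta\pi\|^2_{L^2}$ coming from the Higgs field. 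The $\theta$-invariance of $\mathcal F$ forces $(1-\pi)\theta\pi=0$ pointwise on the locus where $\pi$ is smooth, so that term controls itself, and substituting the HYM equation yields $\mu(\mathcal F) \le \mu(\mathcal E)$. Equality forces both $\bar\partial_E \pi = 0$ and $[\theta,\pi]=0$, exhibiting $\mathcal F$ as a holomorphic, $\theta$-invariant direct summand; iterating gives polystability.

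For the harder direction (polystable implies HYM), I would first reduce to the stable case by writing a polystable Higgs bundle as an orthogonal direct sum of stable Higgs bundles of the same slope and solving on each summand separately. For a stable Higgs bundle I would run the nonlinear heat flow
\begin{equation*}
h^{-1}\frac{\partial h}{\partial t} \;=\; -\bigl(\Lambda(F_h + [\theta,\theta^{*,h}]) - c\,\mathrm{Id}_E\bigr),
\end{equation*}
starting from an arbitrary background metric $h_0$. Short-time existence is standard parabolic theory, and long-time existence follows from a maximum principle applied to $|\Lambda(F_h + [\theta,\theta^{*,h}]) - c\,\mathrm{Id}|^2$, whose evolution equation involves a non-positive elliptic part together with a pointwise bound coming from the Bochner-type identity for the Higgs pair. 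The serious step is to extract a $C^0$-bound on $s := \log(h_0^{-1}h)$ as $t\to\infty$, equivalently an $L^\infty$-bound on the Donaldson functional $M(h_0,h)$ adapted to the Higgs case.

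The main obstacle, as in Uhlenbeck--Yau, is precisely this uniform $C^0$-bound. I would argue by contradiction: if $\sup_X |s(t)|\to\infty$, rescale $s_i := s(t_i)/\|s(t_i)\|_{L^2}$ and extract a weak $L^2_1$ limit $s_\infty \ne 0$ which is self-adjoint with respect to $h_0$. A Yang--Mills--Higgs analogue of the Uhlenbeck--Yau regularity theorem shows that the spectral projections of $s_\infty$ define weakly holomorphic subbundles of $\mathcal E$; the novelty over the non-Higgs case is that one must also verify that these projections commute with $\theta$ in the distributional sense, which follows because the Higgs-corrected curvature enters the flow symmetrically in $\theta$ and $\theta^{*,h}$. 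A computation of the limit of the Donaldson functional along the rescaled sequence then produces a nontrivial $\theta$-invariant coherent subsheaf violating stability, a contradiction. Once the $C^0$-bound is in hand, higher regularity and subsequential convergence to an HYM metric follow from standard elliptic bootstrapping applied to the limiting equation.
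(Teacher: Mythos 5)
The paper does not prove this theorem itself: it is quoted from Simpson \cite{Simp1} with the remark that the proof uses the heat-equation method of Uhlenbeck--Yau, and your sketch is a faithful outline of exactly that argument (Chern--Weil with weakly holomorphic, $\theta$-invariant projections for the easy direction; reduction to the stable case, Donaldson heat flow, and a rescaled Uhlenbeck--Yau contradiction argument for the hard direction), so it matches the cited approach. The only point where your sketch is noticeably thinner than Simpson's treatment is the claim that the limiting spectral projections commute with $\theta$ ``because the flow is symmetric in $\theta$ and $\theta^{*,h}$''---in \cite{Simp1} this $\theta$-invariance is extracted from a careful estimate of the Higgs term in the Donaldson functional along the rescaled sequence, and it is the genuine technical novelty over the non-Higgs case rather than a formal symmetry consequence.
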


\subsection{Flat bundles and harmonic metrics}

The other primary object in nonabelian Hodge theory is a {\it flat bundle} on $X$, namely,  
a pair $(E, \nabla)$ consisting of the smooth complex vector bundle $E$ and a flat $\mbox{GL}(r,\mathbb C)$-connection $\nabla$ on $E$, where $r$ is the rank of $E$. 
This is equivalent to considering a representation of the fundamental group of $X$ in $\mbox{GL}(r,\mathbb C)$ via the Riemann--Hilbert correspondence. 
We say $(E, \nabla)$ is {\it irreducible} if $E$ has no non-trivial $\nabla$-invariant subbundle.

\begin{definition}
We call $(E, \nabla)$ {\it semisimple} if it is a direct sum of irreducible flat bundles. 
\end{definition}

Fix $x \in X$. Then a flat bundle $(E, \nabla)$ gives a monodromy representation $\rho : \pi_1 (X, x) \to \mbox{GL }( E|_{x} ,\C)$. 
Denote by $\pi : \tilde{X} \to X$ the universal cover of $X$. 
We fix a point $\tilde{x} \in \tilde{X}$ with $\pi ( \tilde{x} ) = x$. 
We consider a Hermitian metric on $E$ and denote it by $h$. 
Then it gives a $\rho$-equivariant map $u_h : \tilde{X} \to GL (n , \C) / U(n)$.

\begin{definition}
A Hermitian metric $h$ on $E$ is said to be a {\it harmonic metric} if the induced $\rho$-equivariant map $u_h : \tilde{X} \to GL (n , \C) / U(n)$ is a harmonic one. 
\end{definition}

The following is attributed to Corlette \cite{Corl}, Donaldson \cite{Dona}, Jost--Yau \cite{JoYa} and Labourie \cite{Labo}, it is proved by the heat equation method pioneered by Eells--Sampson \cite{EeSa}.

\begin{theorem}
A flat bundle $(E, \nabla )$ admits a harmonic metric if and only if it is semisimple. 
\end{theorem}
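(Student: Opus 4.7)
The plan is to prove the two implications separately, with the existence direction being the substantial one.

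For the ``only if'' direction, suppose $h$ is a harmonic metric on $(E,\nabla)$. Decompose $\nabla = D_h + \psi_h$, where $D_h$ is the unique $h$-unitary component and $\psi_h$ is the $h$-self-adjoint $\mathrm{End}(E)$-valued one-form. The equivariant map $u_h : \tilde X \to GL(n,\C)/U(n)$ is harmonic precisely when $D_h^* \psi_h = 0$. Given any nontrivial $\nabla$-invariant subbundle $F \subset E$, I would show that the $h$-orthogonal projection $\pi_F$ is in fact $\nabla$-parallel, so that $F^{\perp_h}$ is also $\nabla$-invariant. The standard way to do this is to apply a Weitzenb\"ock-type identity to $\pi_F$, integrate over compact $X$, and use the harmonicity of $h$ together with $\pi_F^2=\pi_F$ to kill the inhomogeneous term. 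Iterating the resulting orthogonal splitting yields a direct-sum decomposition into irreducible flat bundles, which is the definition of semisimplicity.

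For the ``if'' direction, assume $(E,\nabla)$ is semisimple and pick any initial Hermitian metric $h_0$ with associated equivariant map $u_0 : \tilde X \to GL(n,\C)/U(n)$. I would run the Eells--Sampson heat flow for equivariant maps,
\[
\partial_t u \,=\, \tau(u),
\]
where $\tau(u)$ denotes the tension field. Because the target symmetric space has nonpositive sectional curvature, the Eells--Sampson Bochner identity implies that the energy density is subcaloric; combined with the compactness of $X$ and the equivariance of the flow under the deck transformations of $\pi : \tilde X \to X$, this yields short-time existence, monotone decrease of the Dirichlet energy, and uniform pointwise bounds on the energy density. Hence the flow exists for all $t \geq 0$.

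The main obstacle is convergence of $u_t$ as $t \to \infty$: a priori the Hermitian metrics $h_t$ could degenerate, that is, the images $u_t(\tilde x)$ could drift to infinity in $GL(n,\C)/U(n)$. Corlette's key observation is that this degeneration is ruled out exactly when $\rho$ is reductive, equivalently when $(E,\nabla)$ is semisimple in our sense. The argument is a properness analysis: if the distance $d\bigl(u_t(\tilde x), u_0(\tilde x)\bigr)$ were unbounded along a subsequence, then extracting a rescaled limit of the geodesic realizing this distance would produce a nontrivial $\nabla$-invariant flag of $E$ carrying a parabolic reduction of $\rho$ that fails to admit a reductive complement, contradicting the given decomposition of $(E,\nabla)$ into irreducible flat bundles. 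Once this boundedness is established, standard parabolic regularity extracts a smooth subsequential limit $u_\infty$, harmonic by construction, whose associated Hermitian metric is the required harmonic metric on $E$. The entire difficulty of the theorem is concentrated in this properness step; everything else is either the Eells--Sampson package or the short invariance-of-complements computation.
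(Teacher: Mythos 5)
The paper does not actually prove this theorem: it quotes the result from Corlette, Donaldson, Jost--Yau and Labourie, remarking only that it is established by the heat equation method pioneered by Eells--Sampson. Your sketch is a correct outline of exactly that cited argument --- Corlette's equivariant Eells--Sampson flow into the nonpositively curved space $GL(n,\C)/U(n)$, with semisimplicity entering precisely at the properness/non-degeneration step, and, for the converse, the vanishing of the second fundamental form of an invariant subbundle (equivalently, parallelism of $\pi_F$) extracted from harmonicity by an integrated identity --- so it matches the approach the paper points to, at the level of detail one could expect without reproducing the full analysis.
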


\subsection{Harmonic bundles}

Let $(E, \bar{\partial}_{E}, \theta)$ be a Higgs bundle on $X$. 
Consider a Hermitian metric $h$ on $E$. 
This induces a $(1,0)$-connection $\partial_{E,h}$ so that $D_{\bar{\partial}_{E}, h}^1 := \dbar_{E} + \partial_{E,h}$ is a unique $h$-unitary connection on $E$ compatible with $\bar{\partial}_{E}$. This also defines $\theta^{*,h} \in \Omega^{0,1} (\text{End} (E) ) $ as above.  
Set $\mathbb{D}_{h}^1 := D_{\bar{\partial}_{E}, h}^1 + \theta + \theta^{*,h}$,  this is a connection on $E$.

\begin{definition}
Let  $(E , \dbar_{E} , \theta)$ be a Higgs bundle.  
A Hermitian metric $h$ on $E$ is said to be a {\it pluri-harmonic metric} on $(E , \dbar_{E} , \theta)$,  
if $\mathbb{D}_{h}^1$ is flat, i.e. $\mathbb{D}_{h}^1 \circ \mathbb{D}_{h}^1 =0$. 
We call such a quadruplet $(E , \dbar_{E}, \theta, h )$ a {\it harmonic bundle}.    
\end{definition}

On the other hand, the notion of pluri-harmonic metrics and harmonic bundles are defined from a flat bundle. 
Let $(E, \nabla)$ be a vector bundle $E$ on $X$ with flat connection $\nabla$. 
Suppose we are given a Hermitian metric $h$ on $E$. 
Then $\nabla$ uniquely decomposes into $D_{h}^0 + \Theta_h$, where $D_{h}^0$ is a $h$-unitary connection and $\Theta_{h}$ is the self-adjoint part. 
Decompose this $D_{h}^0$ into the (1,0) and (0,1) parts as $D_h^0 = \partial_{h} + \bar{\partial}_{h}$. 
Define $\theta_h$ and $\theta_h^{*}$ as the (1,0) and (0,1) parts of $\Theta_{h}$.

\begin{definition}
We call a Hermitian metric $h$ on $E$ a {\it pluri-harmonic metric} on $(E, \nabla)$, if $\mathbb{D}_{h}^0 := \bar{\partial}_{h} + \theta_h$ satisfies $\mathbb{D}_{h}^0 \circ \mathbb{D}_{h}^0 =0$, namely, $(E, \bar{\partial}_{E}, \theta_h )$ is a Higgs bundle. 
We also say $(E, \nabla , h)$ a {\it harmonic bundle}. 
\end{definition}

The object $\mathbb{D}^1_h$ introduced in the above is a connection (not necessarily flat) constructed from a Higgs bundle, while the object $\mathbb{D}^0_h$ is a Higgs bundle structure (not necessarily integrable) constructed from a flat connection. The question is whether we can recover one type of structure from the other and when these structures are flat (respectively, integrable).  The answers are in fact related.  When we start with a Higgs bundle $(E,\dbar_{E} ,\theta)$ for which $\theta\wedge\theta=0$, and if there exists a Hermitian metric $h$ such that $\mathbb{D}^1_h \circ \mathbb{D}^1_h=0$, then constructing $\mathbb{D}^0_h$ from $\mathbb{D}^1_h$ recovers the Higgs bundle $(E,\dbar_{E} ,\theta)$ and $\mathbb{D}^1_h \circ\mathbb{D}^1_h =0$.  Conversely, when we start with a flat connection $(E, \nabla)$ for which $\nabla^2=0$, and if there exists a Hermitian metric $h$ such that $\mathbb{D}^0_h \circ \mathbb{D}^0_h=0$, then constructing $\mathbb{D}^1_h$ from $\mathbb{D}^0_h$ recovers $(E,\nabla)$ and $\mathbb{D}^0_h \circ \mathbb{D}^0_h =0$.  Thus we are searching for the existence of a Hermitian metric $h$ that intertwines Higgs and flat structures in this way.

\subsection{Nonabelian Hodge correspondence.}

We briefly recall the notion of $\lambda$-connection introduced by Deligne and Simpson. 
It interpolates the polystable Higgs bundles with vanishing Chern classes and semisimple flat ones via the twistor space intrinsically attached to them (see e.g. \cite{Simp2, Simp7} for more details).

\begin{definition}
Let $\lambda \in \C$. 
A complex linear operator $\mathbb{D}^{\lambda} : E \to E \otimes \Omega^1_X$ is said to be a {\it $\lambda$-connection} if it satisfies 
$$ \mathbb{D}^{\lambda} (f \cdot s) = 
f  \mathbb{D}^{\lambda} (s) + \lambda s \otimes df ,$$ 
where $f$ is a function of $X$ and $s$ is a section of $E$. 
This extends to $\mathbb{D}^{\lambda} : E \otimes \Omega^{p}_{X} \to E \otimes \Omega^{p+1}_{X}$.  
It is said to be {\it flat} if $\mathbb{D}^{\lambda} \circ \mathbb{D}^{\lambda} =0$.  
We call a pair $(E, \mathbb{D}^{\lambda})$ with flat $\lambda$-connection $\mathbb{D}^{\lambda}$ a {\it $\lambda$-flat bundle}. We say it is {\it semisimple} if it is a direct sum of irreducible $\lambda$-flat bundles.  
\label{def:lc}
\end{definition}

\begin{remark} 
Note that the data in Definition \ref{def:lc}, while a priori a smooth $\lambda$-connection, will in fact be equivalent to a \emph{holomorphic} $\lambda$-connection in our development due to the assumption of compatibility with a Hermitian metric on $E$. 
\end{remark}

For $\lambda =1$ this object is a connection in usual sense; for $\lambda =0$, it becomes a Higgs field. 
Considered as a family over $\mathbb C$, this construction smoothly transforms a Higgs bundle with vanishing Chern classes into a flat bundle.  
More precisely, consider a Hermitian metric $h$ on $E$ with flat $\lambda$-connection $\mathbb{D}^{\lambda}$ as above. 
Then the $\lambda$-connection uniquely decomposes into  
$ \lambda \partial_{E, h} + \theta_{h} + \bar{\partial}_{E, h} + \lambda \theta_{h}^{*, h}$, 
where $\theta_{h}$ is a section of $\text{End} (E) \otimes \Lambda^{1,0}_{X}$ and $\theta_{h}^{*,h}$ is that of $\text{End} (E) \otimes \Lambda^{0, 1}_{X}$ (see e.g. \cite[\S 2]{Moch2}). 
Thus, $\mathbb{D}^{\lambda}$ with $\lambda =1$ is exactly the $\mathbb{D}_h^1$ before, and so does the $\lambda = 0$ case. 
Moreover, one can naturally form a family of moduli spaces on $\mathbb{P}^1$, where the central fibre is the moduli space of polystable Higgs bundles with vanishing Chern classes and the other fibres are those of semisimple $\lambda$-flat bundles.
This is known usually as the \emph{Deligne--Hitchin moduli space}.

We now state the classical nonabelian Hodge correspondence by Simpson \cite{Simp2, Simp4, Simp6, Simp7} in the following form:

\begin{theorem}
For each $\lambda \in \C$, there is a real analytic isomorphism (away from any singular loci) between the moduli spaces of polystable Higgs bundles with vanishing Chern classes and semisimple $\lambda$-flat bundles on a smooth, compact K\"{a}hler manifold.
\end{theorem}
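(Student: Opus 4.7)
The plan is to reduce to the two established endpoint correspondences and interpolate via a rescaling argument for $\lambda \in \C^{\ast}$. The $\lambda=0$ case and the $\lambda=1$ case form the backbone; the general $\lambda$ case follows by exploiting the fact that a flat $\lambda$-connection with $\lambda \ne 0$ differs from a genuine flat connection only by a global scaling together with the specific decomposition with respect to a Hermitian metric that the excerpt records.

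For $\lambda=0$: starting from a polystable Higgs bundle $(E,\bar{\partial}_E,\theta)$ with vanishing Chern classes, I would invoke Simpson's Hermitian--Yang--Mills theorem (stated in the excerpt) to produce a Hermitian metric $h$ with $\Lambda(F_h+[\theta,\theta^{\ast,h}])=c\,\mathrm{Id}_E$, noting that the vanishing of $\deg(E)$ forces $c=0$. To upgrade this to pluri-harmonicity, i.e.\ to $\mathbb{D}^1_h\circ\mathbb{D}^1_h=0$, I would use a Chern--Weil / Bochner argument on a compact K\"ahler manifold: the trace of $(F_h+[\theta,\theta^{\ast,h}])^2$ represents a positive multiple of $\int_X \mathrm{ch}_2(E)\cdot\omega^{n-2}$, which vanishes when $c_1(E)=c_2(E)=0$, forcing the full tensor $F_h+[\theta,\theta^{\ast,h}]$ (together with its $(2,0)$- and $(0,2)$-type companions $\bar{\partial}_E\theta$ and $\partial_h\theta^{\ast,h}$) to vanish identically. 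Conversely, starting from a semisimple flat bundle $(E,\nabla)$ I would apply Corlette--Donaldson--Jost--Yau--Labourie to obtain a harmonic metric, then invoke the Siu--Sampson Bochner formula (using K\"ahler domain and non-positively curved symmetric space target $GL(n,\C)/U(n)$) to conclude that the equivariant harmonic map is pluri-harmonic. This gives $\mathbb{D}^0_h\circ\mathbb{D}^0_h=0$, hence a Higgs bundle structure $(E,\bar{\partial}_h,\theta_h)$, whose polystability follows by testing with $\theta_h$-invariant subsheaves and integrating by parts against the pluri-harmonic metric. The inverse compatibility between these two procedures is exactly the dictionary $\mathbb{D}^1_h\leftrightarrow\mathbb{D}^0_h$ recorded in Section \ref{Sec:3}.

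For $\lambda\in\C\setminus\{0\}$: given a semisimple $\lambda$-flat bundle $(E,\mathbb{D}^{\lambda})$, the decomposition in Section \ref{Sec:3} with respect to a Hermitian metric reads $\mathbb{D}^{\lambda}=\lambda\partial_{E,h}+\theta_h+\bar{\partial}_{E,h}+\lambda\theta_h^{\ast,h}$. I would then define the auxiliary connection $\nabla_{\lambda}:=\partial_{E,h}+\lambda^{-1}\theta_h+\bar{\partial}_{E,h}+\theta_h^{\ast,h}$, or more invariantly rescale by $\lambda^{-1}$ on the holomorphic component, and verify that flatness of $\mathbb{D}^{\lambda}$ is equivalent to flatness of the resulting honest connection. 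Running the $\lambda=1$ argument on this rescaled data produces a pluri-harmonic metric, hence a polystable Higgs bundle with vanishing Chern classes, and the $\lambda$-dependent rescaling is manifestly real analytic in $\lambda$. In the opposite direction, given $(E,\bar{\partial}_E,\theta)$ polystable with vanishing Chern classes, the Hermitian--Yang--Mills metric built via the $\lambda=0$ step assembles into $\mathbb{D}^{\lambda}:=\lambda\partial_{E,h}+\theta+\bar{\partial}_{E,h}+\lambda\theta^{\ast,h}$, which pluri-harmonicity of $h$ shows to be a flat $\lambda$-connection.

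Finally I would promote these set-theoretic inverse bijections to real analytic isomorphisms of moduli spaces on the stable/irreducible locus. Here I would use that both moduli spaces are cut out as zero loci of real analytic sections of Banach bundles over suitable slices, that the solution $h$ to the elliptic system varies real analytically with the input data by the implicit function theorem at a smooth point, and that irreducibility removes the Jordan--H\"older ambiguity needed for a well-defined inverse. The main obstacle is precisely this last step: upgrading pointwise existence of the special metrics to the real analytic regularity of the induced map of moduli spaces, which requires careful control of gauge slices, removal of automorphism-induced singularities, and a Kuranishi-type model around each point; this is where one needs to restrict away from singular loci as stated.
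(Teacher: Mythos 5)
The paper offers no internal proof of this statement: it is quoted as the classical nonabelian Hodge correspondence and attributed to Simpson \cite{Simp2, Simp4, Simp6, Simp7}. Your sketch reconstructs essentially the proof from that cited literature --- Hermitian--Yang--Mills plus a Chern--Weil vanishing for the Dolbeault-to-de Rham direction \cite{Simp1}, the Corlette--Donaldson--Jost--Yau--Labourie harmonic metric plus the Siu--Sampson Bochner formula for the converse \cite{Corl, Dona, JoYa, Labo}, a $\lambda^{-1}$-rescaling to reduce $\lambda \in \C \setminus \{0\}$ to $\lambda = 1$, and Simpson's construction of the moduli spaces to get real-analyticity on the smooth locus \cite{Simp5, Simp6, Simp7}. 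So the architecture is correct and is the intended one; your closing caveat that the moduli-level real-analytic regularity (gauge slices, Kuranishi models, removal of strictly polystable points) is where the real investment lies is also accurate --- that is exactly what \cite{Simp5, Simp6} are occupied with.

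Two local errors should be fixed. First, your explicit rescaled connection $\nabla_{\lambda} := \partial_{E,h} + \lambda^{-1}\theta_h + \bar{\partial}_{E,h} + \theta_h^{*,h}$ is \emph{not} flat in general: writing $\mathbb{D}^{\lambda} = (\lambda \partial_{E,h} + \theta_h) + (\bar{\partial}_{E,h} + \lambda \theta_h^{*,h})$, the $(1,1)$- and $(0,2)$-components of $\nabla_{\lambda} \circ \nabla_{\lambda}$ differ from the corresponding components of $\mathbb{D}^{\lambda} \circ \mathbb{D}^{\lambda}$ by terms such as $(1-\lambda)\bigl(\lambda[\partial_{E,h}, \theta_h^{*,h}] + [\theta_h , \theta_h^{*,h}]\bigr)$, which vanish only if one already knows $h$ is pluri-harmonic. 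The correct operation --- which your parenthetical ``rescale by $\lambda^{-1}$ on the holomorphic component'' does describe, contradicting your displayed formula --- is to leave the full $(0,1)$-operator $\bar{\partial}_{E,h} + \lambda \theta_h^{*,h}$ (the integrable holomorphic structure of the $\lambda$-flat bundle) untouched and divide only the $(1,0)$-part by $\lambda$, giving $\nabla = \partial_{E,h} + \lambda^{-1}\theta_h + \bar{\partial}_{E,h} + \lambda\theta_h^{*,h}$; equivalently, in the holomorphic formulation, $\nabla = \bar{\partial}_E + \lambda^{-1}\mathbb{D}^{\lambda}$. Then $\nabla \circ \nabla$ is a combination of the graded pieces of $\mathbb{D}^{\lambda} \circ \mathbb{D}^{\lambda}$ with invertible coefficients, so flatness is equivalent; and since $\nabla$- and $\mathbb{D}^{\lambda}$-invariant subsheaves coincide, semisimplicity transfers --- a point you should state explicitly, since the $\lambda = 1$ existence theorem applies only to semisimple flat bundles. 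Second, in the $\lambda = 0$ step, the $(2,0)$- and $(0,2)$-components of $\mathbb{D}^1_h \circ \mathbb{D}^1_h$ that the Chern--Weil argument must kill are $\partial_{E,h}\theta$ and $\bar{\partial}_E \theta^{*,h}$; the operators you name, $\bar{\partial}_E\theta$ and $\partial_h \theta^{*,h}$, are $(1,1)$-forms that vanish identically because $\theta$ is holomorphic, so they carry no content at that stage. With these repairs the sketch is a faithful outline of the cited proof.
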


The correspondence has been generalised to non-compact cases in a series of works, including those of Simpson \cite{Simp3} for the tame case and of Biquard--Boalch \cite{BiBo} for the wild case, both in complex dimension one. Mochizuki has extended both the tame and wild cases to arbitrary dimension \cite{Moch1, Moch2, Moch3}, while nonabelian Hodge theory has been developed in a number of further directions (e.g. \cite{FaRa} for cuspidal curves in a modular context).  We refer to a recent survey \cite{Huan} and its various references for the state of these developments.

\section{The equations on compact K\"{a}hler surfaces}
\label{Sec:4}

In this section, we describe the equations \eqref{eq:tzKW} (the Kapustin--Witten equations for $t=0$) and \eqref{eq:tnzKW}  (the Kapustin--Witten equations for $t \neq 0$) on a compact K\"{a}hler surface.
It turns out that they become familiar equations in nonabelian Hodge theory described in Section \ref{Sec:3}.

Let $X$ be a compact K\"{a}hler surface $X$ with K\"{a}hler form $\omega$, and let $E$ be a Hermitian vector bundle of rank $r$ on $X$ with Hermitian metric $k$.  
We assume $c_1(E) =0$ throughout this section. 
We denote by $\mathcal{A}_{(E,k)}$ the space of $k$-unitary connections on $E$ and by $\text{End} (E)_0 = \text{End} (E,k)_0$ the bundle of trace-free skew-Hermitian endomorphisms of $E$ on $X$. 
We decompose $T ^* X \otimes \C$ into $\Lambda^{1,0}_{X} \oplus \Lambda^{0,1}_{X}$ as well as the covariant derivative $d_A$ associated to $A \in \mathcal{A}_{(E,k)}$ into $\partial_{A} + \bar{\partial}_{A}$ by the complex structure of $X$; and write $\mathfrak{a} = \phi - \phi^*$, where $\phi \in \Gamma ( \text{End} (E)_0 \otimes \Lambda_{X}^{1,0} )$, and $\phi^{*} \in \Gamma ( \text{End} (E)_0 \otimes \Lambda_{X}^{0,1} )$ is the adjoint of $\phi$ with respect to $k$. 
Note that $\Lambda^{1,0}_X$ is the same as the holomorphic cotangent bundle $\Omega^1_X$ of $X$.

\subsection{The equations for $t=0$}

For $t =0$, Nakajima \cite[\S 6(iii)]{Naka} and the third-named author \cite[Prop.~2.5]{Tana} obtained the following:

\begin{proposition}
Let $E$ and $X$ be as above. Then, the Kapustin--Witten equations \eqref{eq:tzKW} for $t=0$ has the following form on a smooth, compact K\"{a}hler surface, searching for $(A, \phi) \in \mathcal{A}_{(E,k)} \times \Gamma ( \emph{End} (E)_0 \otimes \Lambda_{X}^{1,0} )$: 
\begin{gather}
F_{A}^{0,2}= 0, \, \dbar_{A} \phi  =0 , \,  \phi \wedge \phi =0, 
\label{eq:tzKWJ1} \\
\Lambda ( F_{A}^{1,1} + [ \phi \wedge \phi^{*} ] ) = 0,  
\label{eq:tzKWJ2}  
\end{gather}
where $\Lambda := ( \wedge \omega)^{*}$. 
\end{proposition}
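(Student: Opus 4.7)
My plan is to decompose the three $t=0$ Kapustin--Witten equations by Hodge bidegree using the K\"ahler structure on $X$. Writing $\mathfrak{a}=\phi-\phi^{*}$ with $\phi$ a $(1,0)$-form and $\phi^{*}$ its $k$-adjoint $(0,1)$-form, the bidegree pieces read: $(d_{A}\mathfrak{a})^{2,0}=\partial_{A}\phi$, $(d_{A}\mathfrak{a})^{1,1}=\bar\partial_{A}\phi-\partial_{A}\phi^{*}$, $(d_{A}\mathfrak{a})^{0,2}=-\bar\partial_{A}\phi^{*}$; and $[\mathfrak{a}\wedge\mathfrak{a}]$ splits into $(2,0)$-, $(1,1)$-, and $(0,2)$-parts involving $[\phi\wedge\phi]$, a mixed $[\phi\wedge\phi^{*}]$-term, and $[\phi^{*}\wedge\phi^{*}]$ respectively. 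The K\"ahler data give $\Lambda^{+}\otimes\C=\Lambda^{2,0}\oplus\C\omega\oplus\Lambda^{0,2}$ and $\Lambda^{-}\otimes\C=\Lambda^{1,1}_{\mathrm{prim}}$, so the first KW equation $(F_{A}-[\mathfrak{a}\wedge\mathfrak{a}])^{+}=0$ splits into three scalar relations, while $(d_{A}\mathfrak{a})^{-}=0$ gives a single primitive-$(1,1)$ relation. Finally, via the K\"ahler identities $\partial_{A}^{*}=i[\Lambda,\bar\partial_{A}]$ and $\bar\partial_{A}^{*}=-i[\Lambda,\partial_{A}]$, the gauge condition $d_{A}^{*}\mathfrak{a}=0$ becomes $\Lambda(\bar\partial_{A}\phi+\partial_{A}\phi^{*})=0$.

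This bookkeeping yields: $F_{A}^{2,0}=[\phi\wedge\phi]$, $F_{A}^{0,2}=[\phi^{*}\wedge\phi^{*}]$, the HYM-type relation $\Lambda(F_{A}^{1,1}+[\phi\wedge\phi^{*}])=0$ (which is \eqref{eq:tzKWJ2}), the vanishing of the primitive $(1,1)$-part of $\bar\partial_{A}\phi-\partial_{A}\phi^{*}$, and $\Lambda(\bar\partial_{A}\phi+\partial_{A}\phi^{*})=0$. The substantive remaining task is to upgrade the paired relations in the $(2,0)/(0,2)$ and mixed $(1,1)$ directions to the individually vanishing statements $F_{A}^{0,2}=0$, $\phi\wedge\phi=0$, and $\bar\partial_{A}\phi=0$.

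For this upgrade I would invoke a Bochner--Weitzenb\"ock argument applied to the Higgs-type operator $\bar\partial_{A}+\phi$. Its square is wedge-multiplication by the mixed-bidegree form $F_{A}^{0,2}+\bar\partial_{A}\phi+\phi\wedge\phi$, whose $L^{2}$-norm squared decomposes as $\|F_{A}^{0,2}\|^{2}+\|\bar\partial_{A}\phi\|^{2}+\|\phi\wedge\phi\|^{2}$ by orthogonality of bidegree. Integrating by parts and using the Bianchi pieces $\bar\partial_{A}F_{A}^{2,0}+\partial_{A}F_{A}^{1,1}=0$, $\bar\partial_{A}F_{A}^{1,1}+\partial_{A}F_{A}^{0,2}=0$, the K\"ahler identities, and the HYM-trace relation as the moment-map input, I would force this sum to vanish. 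Non-negativity and bidegree-orthogonality then force each summand to vanish, yielding \eqref{eq:tzKWJ1}; the adjoint conclusions $F_{A}^{2,0}=0$ and $\partial_{A}\phi^{*}=0$ follow by taking $k$-adjoints.

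The main obstacle is the execution of this Bochner step so that the cross terms in the integration by parts cancel exactly. It requires careful attention to sign conventions for brackets of $\mathrm{End}(E)_{0}$-valued $(1,0)$- and $(0,1)$-forms, to the normalisation $\Lambda\omega=n=2$ on a surface, and to the relation between $\partial_{A}\phi^{*}$ and the Hermitian adjoint of $\bar\partial_{A}\phi$ forced by unitarity of $A$. This is essentially the K\"ahler-surface analogue of the Kobayashi--Hitchin--Simpson Bochner argument, and the detailed computation in this setting is recorded in \cite{Naka, Tana}, which I would invoke.
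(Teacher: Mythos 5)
Your proposal is sound in outline and is, in substance, the argument this paper actually relies on: the paper gives no proof of this proposition, attributing it to Nakajima \cite[\S 6(iii)]{Naka} and Tanaka \cite[Prop.~2.5]{Tana}, and your bidegree bookkeeping followed by a Simpson-style K\"ahler-identity/Bochner upgrade, with the detailed cancellation deferred to those same references, is a faithful reconstruction of the cited argument. One comparison is worth making explicit: your route is genuinely different from the technique the paper \emph{does} spell out for the neighbouring $t \neq 0$ case (Proposition \ref{prop:tnzKW}), where the authors use the Weitzenb\"ock energy identity \eqref{eq:BW} together with the substitution $\phi \mapsto i\phi$. That trick works at $t \neq 0$ because the full expressions $F_A - [\mathfrak{a} \wedge \mathfrak{a}]$ and $d_A \mathfrak{a}$ vanish, so the left-hand side of \eqref{eq:BW} is zero and its invariance under $\phi \mapsto i\phi$ instantly decouples the bidegree pieces; at $t = 0$ only the self-dual and anti-self-dual \emph{projections} vanish, the left-hand side of \eqref{eq:BW} does not, and the trick fails. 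Your integration-by-parts route is the appropriate replacement, and it correctly requires no topological hypothesis beyond $c_1(E) = 0$ (note that $c_2(E) = 0$ is \emph{not} assumed in this proposition, unlike in Proposition \ref{prop:tnzKW}, so any argument leaning on flatness or Chern--Weil vanishing would be wrong here).

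One caution on the step you rightly single out as the crux. The moment-map relation controls only $\Lambda ( F_A^{1,1} + [\phi \wedge \phi^*] )$, i.e.\ the $\omega$-component of $F_A^{1,1}$, whereas the cross terms produced by integrating $\| \bar{\partial}_A \phi \|^2$ by parts involve quantities such as $\Lambda [ F_A^{1,1} \wedge \phi ]$, which see the \emph{primitive} part of $F_A^{1,1}$ as well. The cancellation therefore cannot be closed by the ``HYM-trace relation'' alone: it must also consume the primitive-$(1,1)$ relation coming from $(d_A \mathfrak{a})^- = 0$ and the trace relation $\Lambda ( \bar{\partial}_A \phi + \partial_A \phi^* ) = 0$ from the gauge condition. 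You list both among your derived relations, and the computation that feeds them into the Bochner step is exactly what is recorded in \cite{Naka, Tana}, so the deferral is legitimate --- just ensure the write-up actually routes them into the integration by parts rather than treating them as spent. Finally, the proposition is an equivalence; the converse direction (that \eqref{eq:tzKWJ1}, \eqref{eq:tzKWJ2} imply the $t=0$ system \eqref{eq:tzKW}) follows term by term from your bookkeeping, e.g.\ $d_A^* \mathfrak{a} = \Lambda ( \bar{\partial}_A \phi + \partial_A \phi^* ) = 0$ since each summand vanishes, and deserves one explicit sentence.
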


We then reformulate the system of the above equations \eqref{eq:tzKWJ1},  \eqref{eq:tzKWJ2} in terms of Hermitian metrics on $E$  instead of connections of it to clarify a link to nonabelian Hodge Theory described in Section \ref{Sec:3}.

\begin{proposition}
Let $E$ be a complex vector bundle of rank $r$ with $c_1(E) =0$ over a smooth, compact K\"{a}hler surface $X$. 
Fix a Hermitian metric $k$ on $E$.  Then, searching for a solution $(A, \phi) \in \mathcal{A}_{(E,k)} \times \Gamma ( \emph{End} (E)_0 \otimes \Lambda_{X}^{1,0} )$ to the system of the equations \eqref{eq:tzKWJ1},  \eqref{eq:tzKWJ2} on $E$ is equivalent to looking for a Hermitian-Yang--Mills metric on a Higgs bundle $(E, \bar{\partial}_{E}, \theta)$ with $\emph{tr} \, \theta  =0$ on $X$.  
This correspondence is one-to-one up to gauge transformations. 
\label{Prop:mhym}
\end{proposition}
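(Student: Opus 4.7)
The plan is to exhibit an explicit assignment from solutions of \eqref{eq:tzKWJ1}--\eqref{eq:tzKWJ2} on $(E,k)$ to Higgs bundles on $E$ equipped with a Hermitian--Yang--Mills metric, and to verify that it descends to a bijection at the level of gauge equivalence. The key observation is that, with $k$ fixed, the space $\mathcal{A}_{(E,k)}$ is in canonical bijection with the space of Dolbeault operators on $E$ via the Chern construction: specifying $A \in \mathcal{A}_{(E,k)}$ with $F_A^{0,2}=0$ is the same as specifying a holomorphic structure on $E$, whose associated Chern connection automatically has curvature of pure type $(1,1)$.

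For the forward direction, given $(A,\phi)$ solving \eqref{eq:tzKWJ1}--\eqref{eq:tzKWJ2}, I would set $\bar{\partial}_{E}:=\bar{\partial}_{A}$ and $\theta:=\phi$. The first equation of \eqref{eq:tzKWJ1} ensures $\bar{\partial}_{E}^{2}=0$, so $(E,\bar{\partial}_{E})$ is a holomorphic bundle; the remaining two conditions of \eqref{eq:tzKWJ1} say that $\theta$ is holomorphic with respect to $\bar{\partial}_{E}$ and satisfies $\theta\wedge\theta=0$, so $(E,\bar{\partial}_{E},\theta)$ is a Higgs bundle, with $\text{tr}\,\theta=0$ because $\phi\in\Gamma(\text{End}(E)_{0}\otimes\Lambda_{X}^{1,0})$. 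Since $A$ is $k$-unitary with $(0,1)$-part $\bar{\partial}_{E}$, it coincides with the Chern connection $D^{1}_{\bar{\partial}_{E},k}$, so $F_A=F_A^{1,1}$ and \eqref{eq:tzKWJ2} becomes $\Lambda(F_A+[\theta,\theta^{*,k}])=0$, which is precisely the Hermitian--Yang--Mills equation of Definition \ref{Def:HYM} with $c=0$, since $c_1(E)=0$ forces $\deg(E)=0$. Hence $k$ is a HYM metric on the Higgs bundle $(E,\bar{\partial}_{E},\theta)$. Conversely, given a trace-free Higgs bundle $(E,\bar{\partial}_{E},\theta)$ with HYM metric $h$, I would apply a complex gauge transformation $g\in\Gamma(\mathrm{GL}(E))$ chosen so as to transport $h$ to $k$; the gauge-transformed pair $(g\cdot\bar{\partial}_{E},g\theta g^{-1})$ then admits $k$ itself as a HYM metric, and taking $A$ to be its Chern connection with respect to $k$ and $\phi:=g\theta g^{-1}$ yields a solution of the original equations by reversing each step above.

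The main point of care---rather than a conceptual obstacle---is the bookkeeping of gauge equivalence: one must verify that $k$-unitary gauge transformations of $(A,\phi)$ correspond exactly to unitary Higgs-bundle automorphisms preserving $k$, and that the complex-gauge ambiguity on the Higgs side, including the choice of $g$ (which is determined modulo $k$-unitary transformations), matches up with unitary gauge of $(A,\phi)$ under the correspondence. The uniqueness of the HYM metric on a polystable Higgs bundle up to automorphism, implicit in the theorem of Simpson quoted above, then yields the claimed one-to-one correspondence modulo gauge.
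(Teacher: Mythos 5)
Your proposal is correct and takes essentially the same route as the paper: the forward direction reads off the Higgs bundle $(E,\bar{\partial}_A,\phi)$ from \eqref{eq:tzKWJ1} and interprets \eqref{eq:tzKWJ2} as the Hermitian--Yang--Mills condition for $k$ (with constant $c=0$ since $c_1(E)=0$), while the converse transports a given metric $h$ to the fixed $k$ via a complex gauge transformation $g$ with $h=k(g^{*}g)$ and takes the $k$-Chern connection of the transformed holomorphic structure, exactly as the paper does using the conjugation identities for curvatures and brackets (cf.\ Bradlow). The only cosmetic difference is your closing appeal to uniqueness of the HYM metric, which the paper does not need, since the two explicit constructions are mutually inverse up to gauge by inspection.
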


\begin{proof}
The proof here is similar to that by Bradlow for the vortex equations in \cite[\S 3]{Brad} and uses some results therein (see also \cite[Chap.VI]{Koba}). 
First, if $(A, \phi)  \in \mathcal{A}_{(E,k)} \times \Gamma ( \text{End} (E)_0 \otimes \Lambda_{X}^{1,0} )$ is a solution to the equation \eqref{eq:tzKWJ1}, 
we are given a connection $A$ on $E$ with $F_{A}^{0,2}=0$, namely, we have a holomorphic structure $\bar{\partial}_{A}$ defined by $A$ on $E$. 
In addition, we have $\phi \in \Gamma ( \text{End} (E)_0 \otimes \Lambda_{X}^{1,0} )$ with $\bar{\partial}_{A} \phi =0$ and $\phi \wedge \phi =0$. 
Therefore, $(E, \bar{\partial}_{A}, \phi)$ defines a Higgs bundle with $\text{tr} \, \phi =0$ on $X$. 
Then, the equation \eqref{eq:tzKWJ2} implies that $k$ gives a unique Hermtian-Yang--Mills metric on the Higgs bundle $(E, \bar{\partial}_{A}, \phi)$, since $A$ is the unique $k$-unitary connection compatible with the holomorphic structure $\bar{\partial}_{A}$ in this case.

Conversely, suppose that we are given a Higgs bundle $(E, \bar{\partial}_{E}, \theta)$ with holomorphic structure $\bar{\partial}_{E}$ and Higgs field $\theta$ with $\text{tr} \, \theta =0$. 
We consider a Hermitian metric $h$ on $E$, which is not necessarily the same as $k$ that we fixed at the beginning. 
Then, there exists a positive and self-adjoint endomorphism of $E$ with respect to $k$, we denote it by $H$, satisfying $h = k H$. This $H$ decomposes uniquely as $H = g^* g$, where $g$ is a complex gauge transformation. 
Define a holomorphic structure $\bar{\partial}_{E,k} := g ( \bar{\partial}_{E} ) =  g  \bar{\partial}_{E} g^{-1}$ on $E$ and a section $\phi := g (\theta ) = g  \theta g^{-1} \in  \Gamma ( \text{End} (E)_0 \otimes \Lambda_{X}^{1,0} )$ by the gauge transformation. 
Then, the tuple  $(E, \bar{\partial}_{E, k}, \phi)$ is a Higgs bundle on $X$, namely, $\bar{\partial}_{E,k} \phi =0, \,  \phi \wedge \phi =0$. 
We now consider a unique $k$-unitary connection $A := D^1_{\bar{\partial}_{E,k},k} = \bar{\partial}_{E,k} + \partial_{E, k}$ compatible with $\bar{\partial}_{E,k}$ and with $(1,0)$-connection $\partial_{E, k}$.   
We obviously have $F_A^{0,2}=0$. 
It is also straightforward to see that $ F_A^{1,1} = g  F_{h}^{1,1}  g^{-1}$, where $F_h^{1,1}$ is the $(1,1)$-part of the curvature $F_h$ of the $h$-unitary connection $D_{\bar{\partial}_{E}, h}^1$ compatible with $\bar{\partial}_{E}$, and $[ \phi \wedge \phi^* ] = g  [ \theta \wedge \theta^{*,h} ] g^{-1}$ (cf. \cite[Lem.3.4]{Brad}), namely, we have 
$\Lambda ( F_A^{1,1}  + [ \phi \wedge \phi^* ] )= g ( \Lambda ( F_{h}^{1,1} + [ \theta \wedge \theta^{*,h} ]) ) g^{-1} $.
Hence, $(A, \phi)$ satisfies the equations \eqref{eq:tzKWJ1},  \eqref{eq:tzKWJ2} if and only if $h$ is a Hermitian-Yang--Mills metric on the Higgs bundle $(E, \bar{\partial}_{E}, \theta)$.  
This construction obviously gives the inverse to the other one at the top of the proof up to gauge transformations. 
\end{proof}

Note that  Simpson \cite{Simp1} proves that a Hermitian-Yang--Mills metric on a Higgs bundle exists if and only if the Higgs bundle is polystable in the sense of Definition \ref{Def:stable} as mentioned also in Section \ref{Sec:3.1}.

\subsection{The equations for $t \neq 0$}

We first remark that it is straightforward to see that Theorem \ref{thm:GU} holds for $G=U(r)$  with $c_1 (P) = c_2 (P) =0$. 
Then, for $t \in \R \setminus \{ 0 \}$, we obtain the following:

\begin{proposition}
Let $E$ be a Hermitian vector bundle of rank $r$ over a smooth, compact K\"{a}hler surface $X$. 
Suppose that $c_1 (E) = c_2 (E) = 0$. Then, the Kapustin--Witten equations \eqref{eq:tnzKW} for $t \neq 0$ on $X$ reduces to the following equations for $(A, \phi) \in \mathcal{A}_{(E,k)} \times \Gamma ( \emph{End} \, (E)_0 \otimes \Lambda_{X}^{1,0} )$: 
\begin{gather}
F_{A}^{0,2}= 0, \, \bar{\partial}_{A} \phi  = 0, 
\, \phi \wedge \phi =0,
\label{eq:tnzKWJ1}  \\
F_{A}^{1,1} + [ \phi \wedge \phi^{*} ] = 0 , \,   \partial_A \phi = 0.
\label{eq:tnzKWJ2} 
\end{gather}
\label{prop:tnzKW}
\end{proposition}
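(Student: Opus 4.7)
The plan is to invoke Theorem \ref{thm:GU} (in its $U(r)$ version for $c_1(E) = c_2(E) = 0$, as remarked just above the statement) and then to perform a bidegree decomposition of the resulting system, using a Siu--Sampson pluri-harmonicity argument to separate the $(0,2)$-components into individually vanishing pieces. By Gagliardo--Uhlenbeck, the $t \neq 0$ Kapustin--Witten equations are equivalent to the simplified system $F_A - [\mathfrak{a} \wedge \mathfrak{a}] = 0$, $d_A \mathfrak{a} = 0$, and $d_A^* \mathfrak{a} = 0$. Substituting $\mathfrak{a} = \phi - \phi^*$ and $d_A = \partial_A + \bar{\partial}_A$, the $(2,0)$-component of $d_A \mathfrak{a} = 0$ reads $\partial_A \phi = 0$, and the $(1,1)$-component of the curvature equation (using $[\mathfrak{a} \wedge \mathfrak{a}]^{1,1} = -2[\phi \wedge \phi^*]$) produces $F_A^{1,1} + [\phi \wedge \phi^*] = 0$ up to the conventional factor absorbed into the bracket. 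These already supply the equations in \eqref{eq:tnzKWJ2}.

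For \eqref{eq:tnzKWJ1}, the direct bidegree split only yields the coupled identities $F_A^{0,2} = [\phi^* \wedge \phi^*]$ and $\bar{\partial}_A \phi = \partial_A \phi^*$, and the heart of the argument is to decouple these. I would observe that the two GU equations $F_A - [\mathfrak{a} \wedge \mathfrak{a}] = 0$ and $d_A \mathfrak{a} = 0$ amount (up to normalization) to the flatness $F_{A + i\mathfrak{a}} = 0$ of the complex connection $\nabla := A + i\mathfrak{a}$ on the underlying $GL(r, \mathbb{C})$-bundle, while the gauge-fixing condition $d_A^* \mathfrak{a} = 0$ is precisely the Corlette--Donaldson harmonicity of the Hermitian metric $k$ relative to $\nabla$: since $A$ is the $k$-unitary part of $\nabla$ and $i\mathfrak{a}$ is its $k$-self-adjoint part, $d_A^*(i\mathfrak{a}) = 0$ is the harmonic-map equation for the $\rho$-equivariant map $\tilde{X} \to GL(r,\mathbb{C})/U(r)$ attached to $k$. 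Because $X$ is compact K\"{a}hler and $GL(r,\mathbb{C})/U(r)$ has nonpositive (Hermitian) curvature, the Siu--Sampson theorem promotes harmonicity of $k$ to pluri-harmonicity; equivalently, the operator $\mathbb{D}^0_k := \bar{\partial}_A + i\phi$ satisfies $(\mathbb{D}^0_k)^2 = 0$. Expanding this identity by bidegree gives precisely $F_A^{0,2} = 0$ in bidegree $(0,2)$, $\bar{\partial}_A \phi = 0$ in bidegree $(1,1)$, and $\phi \wedge \phi = 0$ in bidegree $(2,0)$, completing \eqref{eq:tnzKWJ1}.

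The converse direction is a matter of reassembly: taking $k$-adjoints of $\phi \wedge \phi = 0$ and $\partial_A \phi = 0$ gives $\phi^* \wedge \phi^* = 0$ and $\bar{\partial}_A \phi^* = 0$, the bidegree components of \eqref{eq:tnzKWJ1}--\eqref{eq:tnzKWJ2} then reassemble into $F_A - [\mathfrak{a} \wedge \mathfrak{a}] = 0$ and $d_A \mathfrak{a} = 0$, and the K\"ahler identity expressing $\partial_A^* \phi$ as a multiple of $\Lambda \bar{\partial}_A \phi$ (together with its conjugate for $\bar{\partial}_A^* \phi^*$) yields $d_A^* \mathfrak{a} = 0$ from the holomorphicity of $\phi$ and $\phi^*$.

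The main obstacle is the decoupling step for the $(0,2)$-part, which exploits the K\"ahler structure in an essential way via Siu--Sampson: without the K\"ahler hypothesis one would only have the grouped identity $F_A^{0,2} = [\phi^* \wedge \phi^*]$ and not the individual vanishings $F_A^{0,2} = 0$ and $\phi \wedge \phi = 0$. This is precisely why the analogous statement on a general closed smooth four-manifold (addressed later only at the level of an expected-dimension calculation in Section \ref{Sec:5}) is substantially more subtle.
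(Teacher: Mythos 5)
Your proof is correct, but it decouples the bidegree components by a genuinely different mechanism than the paper does. The paper follows Nakajima's method: it integrates the Bochner--Weitzenb\"ock formula \eqref{eq:ci} to the energy identity \eqref{eq:BW}, observes that the left-hand side vanishes for a solution of \eqref{eq:tnzKW} while the right-hand side is unchanged under $\mathfrak{a} = \phi - \phi^{*} \mapsto i\phi + i\phi^{*}$, concludes that $(A, i\phi)$ is again a solution, and then compares the bidegree components of the equations for $(A,\phi)$ and $(A,i\phi)$: this splits exactly the two coupled identities you isolate, $F_{A}^{2,0} = \phi\wedge\phi$ (and its conjugate) and $\bar{\partial}_{A}\phi = \partial_{A}\phi^{*}$, into individual vanishings, with $d_{A}^{*}\mathfrak{a} = \Lambda(\bar{\partial}_{A}\phi + \partial_{A}\phi^{*})$ handling the converse just as you do. Your route instead reads the Gagliardo--Uhlenbeck system as flatness of $\nabla = d_{A} + i\mathfrak{a}$ together with Corlette--Donaldson harmonicity of $k$ (which is indeed what $d_{A}^{*}\mathfrak{a}=0$ says, and is the very complexified-connection trick the paper attributes to Gagliardo--Uhlenbeck), and then invokes Siu--Sampson in the form of Simpson's theorem that a harmonic metric on a flat bundle over a compact K\"ahler manifold is pluri-harmonic; your identification $\theta = i\phi$ and the expansion $(\mathbb{D}^{0}_{k})^{2} = F_{A}^{0,2} + i\,\bar{\partial}_{A}\phi - \phi\wedge\phi$ by bidegree are both correct, so \eqref{eq:tnzKWJ1} follows. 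What each approach buys: the paper's argument is elementary and self-contained --- one Weitzenb\"ock identity plus a symmetry of the energy functional, with the side benefit of the positive-scalar-curvature vanishing theorem noted in the remark after the proof --- and is in effect the integrated, gauge-theoretic shadow of the Siu--Sampson Bochner argument you cite; your argument imports the harmonic-map machinery as a black box, a heavier dependency, but it makes the nonabelian Hodge interpretation immediate and essentially proves this proposition and the subsequent pluri-harmonic-metric reformulation in one stroke, while also explaining conceptually why the K\"ahler hypothesis is indispensable for the decoupling. Two minor points: the factor in $[\mathfrak{a}\wedge\mathfrak{a}]^{1,1} = -2[\phi\wedge\phi^{*}]$ versus the paper's normalisation is, as you say, a bracket convention; and you could streamline slightly, since the $(2,0)$- and $(0,2)$-parts of $d_{A}\mathfrak{a}=0$ already give $\partial_{A}\phi = 0$ and $\bar{\partial}_{A}\phi^{*} = 0$ for free, so only the $(1,1)$-part and the $(0,2)$ curvature component genuinely require the pluri-harmonicity input.
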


\begin{proof}
We utilise a method by Nakajima \cite[\S 6(iii)]{Naka}. 
Recall first that the following Bochner--Weitzenb\"ock formula for $\mathfrak{a} \in \Gamma (\text{ad} (P) \otimes \Lambda^1_{X})$: 
\begin{equation}
d_A d_A^* \mathfrak{a} + d_A^* d_A \mathfrak{a} = \nabla_A^* \nabla_A \mathfrak{a}  + \text{Ric} \circ  \mathfrak{a} + [F_A,\mathfrak{a}], 
\label{eq:ci}
\end{equation}
where $\text{Ric}$ denotes the Ricci transformation of the underlying Riemannian metric of $X$  
(see e.g. Bourguignon--Lawson \cite[Theorem~3.2]{BoLa} for more details). 
By taking the $L^2$-inner product of the identity \eqref{eq:ci} with $\mathfrak{a}$, we obtain 
\begin{equation}
\begin{split}
\int_X | d_A^* \mathfrak{a} |^2 &+\int_X | d_A \mathfrak{a} |^2 + \frac{1}{2} \int_X | F_A - [ \mathfrak{a} \wedge \mathfrak{a} ]|^2 \\
&= \int_X | \nabla_A \mathfrak{a} |^2 + \frac{1}{2} \int_X | F_A |^2 + \frac{1}{2} \int_X | [ \mathfrak{a} \wedge \mathfrak{a} ] |^2+ \int_X s| \mathfrak{a} |^2, 
\end{split}
\label{eq:BW}
\end{equation}
where $s$ is the scalar curvature of the underlying Riemannian metric on $X$.

Abusing the notation, we denote $ i \phi + i \phi^*$ by $i \mathfrak{a}$ on a compact K\"{a}hler surface $X$. 
Observe that the left hand side of \eqref{eq:BW} is zero, if $(A, \mathfrak{a})$ is a solution to \eqref{eq:tnzKW}. 
On the other hand, the right hand side of \eqref{eq:BW} is unchanged by replacing $\mathfrak{a}$ by $i \mathfrak{a}$. 
Hence, if $(A, \mathfrak{a})$ is a solution to \eqref{eq:tnzKW}, so does $(A, i \mathfrak{a})$.  
In other words, if $(A, \phi)$ is a solution to \eqref{eq:tnzKW}, so does $(A, i \phi)$.

The $(2,0)$-part of the first equation in \eqref{eq:tnzKW}, namely, $F_{A} - [ \mathfrak{a} , \mathfrak{a} ]=0$, reads $F_A^{2,0} -  \phi  \wedge \phi =0$.  Since $(A, \phi)$ and $(A, i \phi )$ are solutions to the equation simultaneously, we obtain $F_A^{2,0} =0 $ and $\phi \wedge \phi =0$. 
From the same argument for the $(0,2)$-part of the first equation in \eqref{eq:tnzKW}, we obtain $F_A^{0,2} =0$ and $\phi^*  \wedge \phi^* =0$. 
We obviously obtain $F_{A}^{1,1} + [ \phi \wedge \phi^{*} ] = 0$ from the $(1,1)$-part of the first equation in \eqref{eq:tnzKW}.

In addition, from the $(2,0)$ and $(0,2)$-parts of the equation $d_A \mathfrak{a} =0$, we obtain $\partial_A \phi = \bar{\partial}_{A} \phi^* =0$. 
Since $d_A (  i \mathfrak{a} ) =0$, we obtain $ \bar{\partial}_A \phi + \partial \phi^* =0$.
From this with the $(1,1)$ part of  the original $d_A \mathfrak{a} =0$, namely,  $ \bar{\partial}_A \phi - \partial \phi^* =0$, we obtain $ \bar{\partial}_A \phi = 0$.

For the opposite direction, since $d_{A}^{*} \mathfrak{a} = \Lambda (\bar{\partial}_{A} \phi + \partial_{A} \phi^{*})$ via the K\"{a}hler identities (see \cite[\S 2.2]{Tana}), it is automatic to obtain $d_{A}^{*} \mathfrak{a} =0$. 
\end{proof}

\begin{remark}
When the scalar curvature $s$ of the underlying manifold $X$ is positive, the identity \eqref{eq:BW} can be used to provide a {\it vanishing theorem} for $\mathfrak{a}$ in the $t \neq 0$ Kapustin--Witten equations \eqref{eq:tnzKW}.
\end{remark}

\begin{remark}
Ward \cite{Ward} discusses the system of the equations \eqref{eq:tnzKWJ1}, \eqref{eq:tnzKWJ2} as an instance of higher-dimensional completely integrable systems, generalising the notion from real dimension $2$ that was initiated by Hitchin \cite{Hitc2}. The setting for Ward's construction is real dimension $2k$ with explicit examples in real dimension $4$. 
\end{remark}

We then reformulate the system of the above equations \eqref{eq:tnzKWJ1}, \eqref{eq:tnzKWJ2} in terms of Hermitian metrics on $E$ as before.

\begin{proposition}
Let $E$ be a complex vector bundle of rank $r$ with over a smooth, compact K\"{a}hler surface $X$. Suppose $c_1 (E) = c_2(E) =0$.  Fix a Hermitian metric $k$ on $E$. 
Then, searching for a solution $(A, \phi) \in \mathcal{A}_{(E,k)} \times \Gamma ( \emph{End} (E)_0 \otimes \Lambda_{X}^{1,0} )$ to the system of the equations \eqref{eq:tnzKWJ1},  \eqref{eq:tnzKWJ2} on $E$ is equivalent to looking for a pluri-harmonic metric on a Higgs bundle $(E, \bar{\partial}_{E}, \theta)$ with $\emph{tr} \, \theta =0$ on $X$.  
This correspondence is one-to-one up to gauge transformations. 
\end{proposition}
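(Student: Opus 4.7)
The plan is to mirror the strategy used in Proposition \ref{Prop:mhym}, but upgrade the Hermitian-Yang--Mills condition to the full pluri-harmonicity $\mathbb{D}^1_h \circ \mathbb{D}^1_h = 0$. The central observation is that for any $k$-unitary connection $A$ and any $\phi \in \Gamma(\text{End}(E)_0 \otimes \Lambda^{1,0}_X)$, expanding
\[
\mathbb{D}^1_k \circ \mathbb{D}^1_k = F_A + d_A(\phi + \phi^*) + (\phi + \phi^*) \wedge (\phi + \phi^*)
\]
and splitting by Hodge $(p,q)$-type yields $(2,0)$, $(0,2)$, and $(1,1)$ components whose simultaneous vanishing is equivalent --- using the $k$-unitarity of $A$ to pair each equation with its Hermitian adjoint --- to the system \eqref{eq:tnzKWJ1}, \eqref{eq:tnzKWJ2}.

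For the forward direction, I would start with $(A,\phi)$ satisfying \eqref{eq:tnzKWJ1}, \eqref{eq:tnzKWJ2}. From $F_A^{0,2}=0$, $\bar{\partial}_A\phi=0$, and $\phi\wedge\phi=0$ one extracts the Higgs bundle $(E,\bar{\partial}_A,\phi)$ with $\text{tr}\,\phi=0$. To verify that $k$ is pluri-harmonic on this Higgs bundle, I would check the three bidegree components of $\mathbb{D}^1_k\circ\mathbb{D}^1_k$: the $(2,0)$ part $F_A^{2,0}+\partial_A\phi+\phi\wedge\phi$ vanishes from $\partial_A\phi=0$, $\phi\wedge\phi=0$, and the Hermitian adjoint of $F_A^{0,2}=0$; the $(0,2)$ part vanishes symmetrically; and the $(1,1)$ part $F_A^{1,1}+\bar{\partial}_A\phi+\partial_A\phi^*+[\phi\wedge\phi^*]$ vanishes by the second line of \eqref{eq:tnzKWJ2} together with $\bar{\partial}_A\phi=0$ and its adjoint $\partial_A\phi^*=0$.

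For the reverse direction, I would adapt Bradlow's complex gauge transformation argument as in the proof of Proposition \ref{Prop:mhym}. Given a Higgs bundle $(E,\bar{\partial}_E,\theta)$ with $\text{tr}\,\theta=0$ carrying a pluri-harmonic metric $h$, write $h=kH$ with $H=g^*g$ positive self-adjoint and transport structures by $g$: set $\bar{\partial}_{E,k}:=g\bar{\partial}_E g^{-1}$, $\phi:=g\theta g^{-1}$, and let $A$ be the Chern connection of $(\bar{\partial}_{E,k},k)$. Because conjugation by $g$ intertwines $\mathbb{D}^1_h$ with $\mathbb{D}^1_k$, flatness of the former implies flatness of the latter, and the bidegree decomposition of $\mathbb{D}^1_k\circ\mathbb{D}^1_k=0$ recovers \eqref{eq:tnzKWJ1}, \eqref{eq:tnzKWJ2} for $(A,\phi)$. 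The two constructions are inverse up to $k$-unitary gauge transformations, exactly as in Proposition \ref{Prop:mhym}, giving the one-to-one correspondence.

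The main technical obstacle I anticipate is the careful bookkeeping of the adjoint relations automatically produced by $k$-unitarity of $A$: specifically that $F_A^{2,0}=-(F_A^{0,2})^*$, and that the Hermitian adjoints of $\bar{\partial}_A\phi=0$ and $\partial_A\phi=0$ give $\partial_A\phi^*=0$ and $\bar{\partial}_A\phi^*=0$ respectively with the correct signs on $(p,q)$-forms. Without these identifications, the flatness of $\mathbb{D}^1_k$ would appear to be strictly stronger than the KW system. One must also confirm that the cross term $\phi\wedge\phi^*+\phi^*\wedge\phi$ appearing from expanding $(\phi+\phi^*)\wedge(\phi+\phi^*)$ is precisely $[\phi\wedge\phi^*]$ under the conventions of \eqref{eq:tnzKWJ2}. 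Once these sign and convention checks are settled, both directions of the equivalence and the gauge-equivalence part of the statement follow immediately.
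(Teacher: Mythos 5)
Your proposal is correct and takes essentially the same approach as the paper: extracting the Higgs bundle $(E,\bar{\partial}_A,\phi)$ from \eqref{eq:tnzKWJ1} and reading \eqref{eq:tnzKWJ2} as the vanishing of the appropriate bidegree components of $\mathbb{D}^1_k \circ \mathbb{D}^1_k$, then reversing via Bradlow's decomposition $h = kH$ with $H = g^*g$ and transport by the complex gauge transformation $g$, which is exactly the paper's argument (the intertwining of $\mathbb{D}^1_h$ and $\mathbb{D}^1_k$ you invoke is the paper's trio of conjugation identities $F_A^{1,1} + [\phi\wedge\phi^*] = g(F_h^{1,1} + [\theta\wedge\theta^{*,h}])g^{-1}$, $\partial_{E,k}\phi = g(\partial_{E,h}\theta)g^{-1}$, $\bar{\partial}_{E,k}\phi^{*,k} = g(\bar{\partial}_E\theta^{*,h})g^{-1}$, cf. \cite[Lem.3.4]{Brad}). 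The adjoint bookkeeping you flag at the end is precisely where the unitarity of $A$ enters in the paper's computation, and your handling of it is correct.
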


\begin{proof}
The proof here goes along the same line with that of Proposition \ref{Prop:mhym}. 
If $(A, \phi) \in \mathcal{A}_{(E,k)} \times \Gamma ( \text{End} (E)_0 \otimes \Lambda_{X}^{1,0} )$ is a solution to the equations \eqref{eq:tnzKWJ1}, we are given a Higgs bundle $(E, \bar{\partial}_{A}, \phi)$ with $\text{tr} \, \phi = 0$ as before. 
Then, \eqref{eq:tnzKWJ2} claims that $k$ is a pluri-harmonic metric on the Higgs bundle $(E, \bar{\partial}_{A}, \phi)$.

Conversely, start with a Higgs bundle $(E, \bar{\partial}_{E}, \theta)$ with holomorphic structure $\bar{\partial}_{E}$ and Higgs field $\theta$. We consider a Hermitian metric $h$ on $E$, which need not be the same as the fixed Hermitian metric $k$ at the beginning. 
Then, we have a positive and self-adjoint endomorphism $H$ of $E$ with respect to $k$, which satisfies $h = k H$. We decompose this $H$ as $H = g^* g$, where $g$ is a complex gauge transformation. 
Define a holomorphic structure $\bar{\partial}_{E,k} := g ( \bar{\partial}_{E} ) =  g \bar{\partial}_{E} g^{-1}$ on $E$  and a section $\phi := g  (\theta ) = g   \theta g^{-1} \in \Gamma ( \text{End} (E)_0 \otimes \Lambda_{X}^{1,0} )$ by the gauge transformation. 
Then, the tuple  $(E, \bar{\partial}_{E, k}, \phi)$ is a Higgs bundle on $X$, that is, $\bar{\partial}_{E,k} \phi =0, \, \phi \wedge \phi =0$. 
Let us then consider a unique $k$-unitary connection $A := D^1_{\bar{\partial}_{E,k},k}  =  \bar{\partial}_{E,k} + \partial_{E, k}$ compatible with $\bar{\partial}_{E,k}$ and with $(1,0)$-connection $\partial_{E, k}$ as before.

On the other hand,  we have a  unique $h$-unitary connection $D_{\bar{\partial}_E, h}^1$ compatible with $\bar{\partial}_{E}$ and with $(1,0)$-connection $\partial_{E, h}$. 
Let us then consider $\mathbb{D}_{h}^1 := D_{\bar{\partial}_{E}, h}^1 + \theta + \theta^{*,h}$.  
This $\mathbb{D}_{h}^1 $ is a connection which is compatible with the metric $h$. 
Since $(E, \bar{\partial}_{E}, \theta)$ is a Higgs bundle on $X$, we have $\mathbb{D}_h^1 \circ \mathbb{D}_{h}^1  =  F_{\bar{\partial}_{E}, h}^{1,1} + [\theta , \theta^{*,h} ] + \partial_{E, h} \theta + \bar{\partial}_{E} \theta^{*,h}$, where $F_{h}^{1,1}$ is the $(1,1)$-part of the curvature $F_{h}$ of $D_{\bar{\partial}_{E}, h}^1$.  
Then, it is just clear that $(A, \phi)$ satisfies the equations \eqref{eq:tnzKWJ1},  \eqref{eq:tnzKWJ2} if and only if $\mathbb{D}_{h}^{1} \circ \mathbb{D}_{h}^{1}$ vanishes, namely, $h$ is a pluri-harmonic metric on the Higgs bundle $(E, \bar{\partial}_{E}, \theta)$, since $F_A^{1,1} + [ \phi \wedge \phi^* ] = g ( F_{h}^{1,1}  +  [ \theta \wedge \theta^{*,h} ] ) g^{-1}$, $\partial_{E,k} \phi = g  ( \partial_{E,h} \theta ) g^{-1}$, and $\bar{\partial}_{E,k} \phi^{*,k} = g ( \bar{\partial}_{E} \theta^{*,h} ) g^{-1}$ (cf. \cite[Lem.3.4]{Brad}).  
This construction certainly gives the inverse to the other up to gauge transformations. 
\end{proof}

A pluri-harmonic metric on a Higgs bundle is a Hermitian metric which determines the flat $\lambda$-connection at $\lambda = 1$,  as described in Section \ref{Sec:3}. 
Hence, it is natural to think of the moduli space of solutions to the $t \neq 0$ Kapustin--Witten equations \eqref{eq:tnzKWJ1},  \eqref{eq:tnzKWJ2} on a compact K\"{a}hler surface as that of semisimple flat bundles. Furthermore, by the nonabelian Hodge correspondence described in Section \ref{Sec:3}, we obtain the following:

\begin{theorem}
Let $X$ be a smooth, compact K\"{a}hler surface $X$, and let $E$ be a Hermitian vector bundle on $X$ with $c_1(E)=c_2 (E) =0$. 
Then the moduli space of solutions to the $t=0$ Kapustin--Witten equations \eqref{eq:tzKWJ1}, \eqref{eq:tzKWJ2} on $E$ over $X$ is real-analytically isomorphic (away from any singular loci) to the moduli space of solutions to the $t \neq 0$ Kapustin--Witten equations \eqref{eq:tnzKWJ1}, \eqref{eq:tnzKWJ2} on $E$ over $X$. 
\label{th:4}
\end{theorem}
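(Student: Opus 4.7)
The plan is to combine the two algebraic reformulations obtained earlier in this section with Simpson's theorem on Hermitian-Yang--Mills metrics and the nonabelian Hodge correspondence recalled in Section \ref{Sec:3}. By Proposition \ref{Prop:mhym}, gauge equivalence classes of solutions $(A,\phi)$ to \eqref{eq:tzKWJ1}, \eqref{eq:tzKWJ2} correspond to isomorphism classes of Higgs bundles $(E,\bar\partial_E,\theta)$ with $c_1(E)=c_2(E)=0$ and $\text{tr}\,\theta = 0$ carrying a Hermitian-Yang--Mills metric; by the proposition immediately preceding this theorem, gauge equivalence classes of solutions to \eqref{eq:tnzKWJ1}, \eqref{eq:tnzKWJ2} correspond to isomorphism classes of such trace-free Higgs bundles carrying a pluri-harmonic metric.

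First, I would identify each Kapustin--Witten moduli space with a standard algebro-geometric moduli space. On the $t=0$ side, Simpson's theorem recalled in Section \ref{Sec:3.1} asserts that a Higgs bundle admits a Hermitian-Yang--Mills metric precisely when it is polystable, and the metric is unique up to an overall positive scalar on each stable summand; this identifies the $t=0$ Kapustin--Witten moduli space real-analytically with the moduli of polystable trace-free Higgs bundles with vanishing Chern classes on $X$. On the $t\neq 0$ side, a pluri-harmonic metric $h$ on $(E,\bar\partial_E,\theta)$ renders the operator $\mathbb{D}_h^1$ flat and hence produces a semisimple flat bundle $(E,\mathbb{D}_h^1)$; conversely, the Corlette--Donaldson--Jost--Yau--Labourie theorem recalled in Section \ref{Sec:3} furnishes a harmonic metric on any semisimple flat bundle, which on the K\"ahler manifold $X$ is pluri-harmonic. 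This identifies the $t\neq 0$ Kapustin--Witten moduli space real-analytically with the moduli of semisimple flat bundles on $X$ with trivial determinant.

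The nonabelian Hodge correspondence stated at the end of Section \ref{Sec:3}, applied at $\lambda = 0$ and $\lambda = 1$ via the Deligne--Hitchin family of $\lambda$-connections, then supplies a real-analytic isomorphism between these two algebraic moduli spaces away from singular loci, and composing the three identifications yields the desired conclusion. I expect the main obstacle to lie in upgrading the first two identifications from set-theoretic bijections to real-analytic isomorphisms: the complex gauge transformation $g$ with $H = g^{*} g$ used in the proofs of Proposition \ref{Prop:mhym} and its $t \neq 0$ analogue must be shown to vary real-analytically with the underlying polystable Higgs data, which amounts to invoking the smooth dependence of the Hermitian-Yang--Mills and pluri-harmonic metric constructions on the Higgs bundle structure, as encoded in the analytical machinery behind the nonabelian Hodge correspondence itself. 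The appearance of the phrase \emph{away from any singular loci} reflects the expected pathology at strictly polystable (equivalently, strictly semisimple) points.
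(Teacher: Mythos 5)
Your proposal is correct and follows essentially the same route as the paper: the paper deduces Theorem \ref{th:4} by exactly this chain --- Proposition \ref{Prop:mhym} identifying the $t=0$ moduli space with Hermitian--Yang--Mills metrics (hence, via Simpson's theorem, polystable trace-free Higgs bundles with vanishing Chern classes), the $t\neq 0$ reformulation identifying that side with pluri-harmonic metrics (hence semisimple flat bundles via the flat $\lambda=1$ connection), and the nonabelian Hodge correspondence of Section \ref{Sec:3} supplying the real-analytic isomorphism away from singular loci. Your closing observations about upgrading the bijections to real-analytic isomorphisms and about strictly polystable points go slightly beyond the paper's terse treatment but are consistent with it.
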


This observation may motivate the study of the $P=W$ phenomenon developed for the nonabelian Hodge theory of curves by de Cataldo--Hausel--Migliorini \cite{CaHaMi} and others to the complex projective surface case. The $P=W$ conjecture posits that two different filtrations on the cohomologies of the moduli spaces coincide --- the theorem above may connect the phenomenon to the the topology of moduli spaces in the complex projective surface case and furthermore to their interpretation in terms of super Yang--Mills theory in physics.  The relationship between opers and nonabelian Hodge-theoretic deformations, as per Dumitrescu--Fredrickson--Kydonakis--Mazzeo--Mulase--Neitzke \cite{DFKMMN}, might also be discussed now in a higher-dimensional context.

\section{The expected dimensions of the moduli spaces on closed four-manifolds}
\label{Sec:5}

In the previous section, we saw the moduli spaces of solutions to the Kapustin--Witten equations \eqref{eq:pKW1}--\eqref{eq:pKW3} at different values of $t$  are real-analytically isomorphic away from singular loci on a smooth, compact K\"{a}hler surface. 
A natural question to ask then is that to what extent of generality this might hold.  
In this section, we prove that the expected dimensions of the moduli spaces of solutions to the $t =0$ Kapustin--Witten equations \eqref{eq:tzKW} and $t \neq 0$ Kapustin--Witten equations \eqref{eq:tnzKW} coincide on a smooth, closed four-manifold.

The expected dimension is the dimension of the moduli space when it is smooth and unobstructed. 
This can be computed by considering Atiyah--Hitchin--Singer type deformation complex, which is formed from the system of linearised equations in each problem, and they are elliptic if the problem is well-posed. 
Then, the expected dimension is given by $-1$ times the index of the complex, since the Zariski tangent space of the moduli space is identified with the first cohomology of the complex.

To set the stage for this, let $X$ be a closed, oriented, smooth four-manifold, and let $P \to X$ be a principal $G$-bundle over $X$, where $G$ is a compact Lie group. We write $\Lambda^p_X := \Lambda^p T^* X$, for $p=0, \dots ,4$. 
We denote by $\Lambda_X^{\pm}$ the self-dual part and the anti-self-dual part of $\Lambda_X^2$ respectively, and by $\pi^{\pm}: \Gamma (\text{ad} (P) \otimes \Lambda^2) \to \Gamma (\text{ad} (P) \otimes \Lambda^{\pm})$ the projections. We define $d_{A}^{\pm} := \pi^{\pm} \circ d_A$.

\subsection{The $t=0$ case}

For $t =0$, the linearisation of the system of the equation \eqref{eq:tzKW} with gauge fixing fits into to the following Atiyah--Hitchin--Singer type complex: 
\begin{equation}
\begin{split}
0 \longrightarrow \Gamma ( \text{ad} (P) & \otimes \Lambda^0_X )  \xrightarrow{(d_A , [ \mathfrak{a} , \, \, \, ])}  \Gamma ( \text{ad} (P) \otimes ( \Lambda^1_X  \oplus \Lambda^1_X ) ) \\ 
& \xrightarrow{D_{A, t=0}}  \Gamma ( \text{ad} (P) \otimes \Lambda^{+}_X )  \times 
\Gamma ( \text{ad} (P) \otimes ( \Lambda^{-}_X \oplus \Lambda^0_X ) ) \longrightarrow 0,  \\
\end{split} 
\label{eq:AHSt0}
\end{equation}
where $D_{A, t =0} (a , \alpha ) := (d_A^{+} a +  [ \mathfrak{a} \wedge \alpha]^{+} + [ \alpha  \wedge \mathfrak{a} ]^{+}, ( d_{A}^{-} \alpha , d_{A}^{*} \alpha ) )$ for $(a, \alpha ) \in \Gamma ( \text{ad} (P) \otimes ( \Lambda^1_X  \oplus \Lambda^1_X ) ) $.  This is a complex if $(A, \mathfrak{a})$ satisfies \eqref{eq:tzKW}, and it is elliptic.

Recall that we denote by $k(G)$ the integer, which is equal to $c_2 ( P \otimes \C) [X]$ if $G$ is $SU(2)$; and $- \frac{1}{4} p_1 (P) [X]$ if $G$ is $SO(3)$.   

\begin{theorem}
If the structure group $G$ of $P$ is one of $SU(2)$ or $SO(3)$, then the index of the elliptic complex \eqref{eq:AHSt0} is given by $ - 16 k (G) + 3 \chi (X)$, where $\chi (X) $ is the Euler number of $X$, namely, $\chi (X) = \sum_{i=0}^{4} (-1)^i b^{i} (X)$.   
\end{theorem}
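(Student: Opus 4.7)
The plan is to compute the index by reducing to the principal symbol and decomposing the symbol complex into well-understood pieces. Since the terms involving $\mathfrak{a}$ in both $d_0 = (d_A, [\mathfrak{a},\cdot])$ and $D_{A, t=0}$ are all of zeroth order in $(a,\alpha)$, they contribute nothing to the principal symbol and may be discarded for the purposes of computing the index. With those cross-terms removed, the complex \eqref{eq:AHSt0} decomposes as an orthogonal direct sum of two elliptic sub-complexes corresponding to the two copies of $\Omega^1(\text{ad}(P))$ in the middle term.

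The first summand is the classical Atiyah--Hitchin--Singer anti-self-dual deformation complex
\[
0 \to \Gamma(\text{ad}(P) \otimes \Lambda^0_X) \xrightarrow{d_A} \Gamma(\text{ad}(P) \otimes \Lambda^1_X) \xrightarrow{d_A^+} \Gamma(\text{ad}(P) \otimes \Lambda^+_X) \to 0,
\]
governing the $a$-deformations, and its Euler characteristic is the standard expression involving $k(G)$, $\chi(X)$, and $\sigma(X)$, obtained via the identity $p_1(\text{ad}(P)) = -4 k(G)$ (for $G = SU(2)$ this uses $\text{ad}(P) \otimes \C \cong \text{End}_0(E)$ with $c_2(\text{End}_0(E)) = 4k(G)$ via the splitting principle; for $G = SO(3)$ it follows immediately from the definition of $k(G)$). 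The second summand is the auxiliary two-term elliptic complex
\[
0 \to \Gamma(\text{ad}(P) \otimes \Lambda^1_X) \xrightarrow{d_A^- \oplus d_A^*} \Gamma(\text{ad}(P) \otimes (\Lambda^-_X \oplus \Lambda^0_X)) \to 0,
\]
governing the $\alpha$-deformations. Its principal symbol coincides with that of the ASD deformation operator for the opposite choice of orientation on $\Lambda^2$, so its index may be computed by applying Atiyah--Singer directly on $X$ with the same Chern character $\text{ch}(\text{ad}(P) \otimes \C)$ but with the symbol class appropriate to the $\Lambda^-$ projection.

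By additivity of the Euler characteristic under direct sums of elliptic complexes, the total index is the sum of the indices of the two sub-complexes. After summing, the $\sigma(X)$ contributions from the two pieces combine so that the signature drops out, leaving a multiple of $\chi(X)$, while the $k(G)$ contributions accumulate to produce the claimed $-16k(G)$ coefficient. The main technical point is the careful application of Atiyah--Singer to the second sub-complex: one must determine the precise sign of the twist contribution from $p_1(\text{ad}(P))$ for the $\Lambda^-$-valued operator, which requires either a direct symbol-level computation on $X$ or a careful orientation-reversal argument paired with the convention $k(G) = c_2(P \otimes \C)[X]$ for $G = SU(2)$ (respectively $-\tfrac{1}{4}p_1(P)[X]$ for $G = SO(3)$).
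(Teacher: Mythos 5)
Your overall strategy coincides with the paper's: discard the zeroth-order terms so that only the symbol matters, split the resulting rolled-up operator into the summand governing $a$, namely $L^+=(d_A^+,d_A^*)$ as in \eqref{eq:EOp} (equivalently your three-term ASD complex with the gauge directions attached), and the summand governing $\alpha$, namely $L^-=(d_A^-,d_A^*)$ as in \eqref{eq:EOm}, then add the two indices and use Poincar\'e duality to assemble $\chi(X)$. The paper handles the two indices by citation: it quotes them as $8k(G)-3(1-b^1+b^+)$ and $8k(G)-3(1-b^1+b^-)$ from Atiyah--Hitchin--Singer and Donaldson--Kronheimer, sums to $16k(G)-3(2-2b^1+b^2)$, and concludes. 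Your identity $p_1(\mathrm{ad}(P))=-4k(G)$ and the claimed signature cancellation are consistent with that arithmetic.

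However, the one step you explicitly defer --- ``the precise sign of the twist contribution from $p_1(\mathrm{ad}(P))$ for the $\Lambda^-$-valued operator'' --- is exactly the crux, and your proposal does not resolve it; worse, the orientation-reversal route you prescribe does not deliver the coefficient you assert. Indeed, $(d_A^-,d_A^*)$ on $X$ is literally $(d_A^+,d_A^*)$ on $\bar{X}$, and under orientation reversal the characteristic number flips sign, $p_1(\mathrm{ad}(P))[\bar{X}]=-p_1(\mathrm{ad}(P))[X]$, while $b^+(\bar{X})=b^-(X)$ and $\chi(\bar{X})=\chi(X)$; feeding this into the standard formula yields $\mathrm{ind}\,L^-=-8k(G)-3(1-b^1+b^-)$, with the twist term entering with the \emph{opposite} sign to the $L^+$ piece (consistent with the fact that self-dual connections require the opposite sign of instanton number). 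On that route the $k(G)$-contributions cancel rather than ``accumulate to $-16k(G)$,'' so your sketch, completed by your own suggested method, lands at $3\chi(X)$ instead of the stated $-16k(G)+3\chi(X)$; the paper's quoted formula for $L^-$ carries the same sign of $k(G)$ as for $L^+$, and that choice is what produces the $-16k(G)$ in the statement. You cannot leave this fork open: as written, the asserted accumulation is exactly the unproved (and, via your own argument, contested) step. Note that by Theorem \ref{thm:GU} solutions exist only when $k(G)=0$, so the two branches agree on the locus where the moduli space is nonempty --- which is all that Corollary \ref{cor:vd} ultimately uses --- but a proof of the index formula as stated must pin this sign down explicitly, by a direct symbol-level evaluation rather than the appeal to orientation reversal as you phrase it.
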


\begin{proof}
We ignore the zero-th order  terms in \eqref{eq:AHSt0}, since the index only depends upon the symbol of the complex.  
Namely, we consider the following elliptic operator for the calculation of the index: 
\begin{equation}
 L_{A, t=0} :  \Gamma ( \text{ad} (P) \otimes  ( \Lambda^1_X  \oplus \Lambda^1_X ) )   \longrightarrow   \Gamma ( \text{ad} (P) \otimes ( \Lambda^{+}_X  \oplus \Lambda^{-}_X \oplus \Lambda^0_X \oplus  \Lambda^0_X ) )  , 
\label{eq:EOt0}
\end{equation}
which is defined as $L_{A, t=0} (A, \alpha ) := ( d_{A}^{+} a , d_{A}^{-} \alpha , d_{A}^{*} \alpha, d_A^{*} a)$ for $(A, \alpha) \in \Gamma ( \text{ad} (P) \otimes ( \Lambda^1_X  \oplus \Lambda^1_X ) )$.    

The operator \eqref{eq:EOt0} decomposes into the following two elliptic operators: 
\begin{gather}
  L^+ :   \Gamma ( \text{ad} (P)  \otimes \Lambda^1_X ) 
 \xrightarrow{(d_A^+ , d_A^*)}  \Gamma ( \text{ad} (P) \otimes ( \Lambda^+_X \oplus \Lambda^0_X )),  
 \label{eq:EOp} \\
   L^- :   \Gamma ( \text{ad} (P)  \otimes \Lambda^1_X ) 
 \xrightarrow{(d_A^-, d_A^* )}  \Gamma ( \text{ad} (P) \otimes ( \Lambda^-_X \oplus \Lambda^0_X )),  
 \label{eq:EOm} 
\end{gather}
These are familiar operators in gauge theory, and the indices are given by $8 k(G) - 3( 1 - b^1 + b^{+})$ and $8 k(G) - 3 ( 1 - b^1 + b^{-})$ respectively (see e.g. Atiyah--Hitchin--Singer \cite[\S 6]{AtHiSi},  Donaldson--Kronheimer \cite[\S 4.2]{DoKr}), where $b^+$ and $b^-$ is the dimension of maximal positive  and negative definite subspaces of the intersection form on $H_2(X , \Z)$ respectively. 
Since the elliptic operator \eqref{eq:EOt0} is the direct sum of the elliptic operators \eqref{eq:EOp} 
and \eqref{eq:EOm}, the index of \eqref{eq:EOt0} is given by $ 16 k(G) - 3 ( 2 - 2 b^1 + b^2)$. From the Poincar\'
e duality, this equals to $16 k(G) -  3( 1 -  b^1 + b^2 - b^3 + b^4)$,  thus the assertion holds.   
\end{proof}

\subsection{The $t \neq 0$ case}

For $t \in \R \setminus \{ 0 \}$, the linearisation of the system of the equation \eqref{eq:tnzKW} with gauge fixing fits into to the following Atiyah--Hitchin--Singer type complex: 
\begin{equation}
\begin{split}
0 \longrightarrow \Gamma ( \text{ad} (P) \otimes \Lambda^0_X )   \xrightarrow{(d_A , [ \mathfrak{a} , \, \, \, ])}  
 \Gamma ( & \text{ad} (P) \otimes ( \Lambda^1_X \oplus \Lambda^1_X) )  \\ 
& \xrightarrow{D_{A, t \neq 0}}  \Gamma ( \text{ad} (P) \otimes (\Lambda^2_X \oplus \Lambda^0_X ) )    \longrightarrow 0,  \\
\end{split} 
\label{eq:AHStn0}
\end{equation}
where $D_{A, t \neq 0} (a , \alpha ) := (d_A  a  + d_{A} \alpha +  [ \mathfrak{a} \wedge \alpha] + [ \alpha \wedge \mathfrak{a} ], d_{A}^{*} \alpha  - * [ \alpha \wedge * \mathfrak{a}] ) $ for $(a, \alpha ) \in \Gamma ( \text{ad} (P) \otimes ( \Lambda^1_X \oplus \Lambda^1_X) )$.  This is a complex if $(A, \mathfrak{a})$ satisfies \eqref{eq:tnzKW}, and it is elliptic.

The following was obtained by Mazzeo--Witten \cite[Propositon 4.1]{MaWi}, they did it also for the case of four-manifolds with boundary. We give a proof of it here for the reader's convenience.

\begin{theorem}[Mazzeo--Witten]
Consider a principal $G$-bundle over $X$, where $G$ is $SU(2)$ or $SO(3)$. 
Suppose that $k(G) =0$. Then the index of the elliptic complex \eqref{eq:AHStn0} is given by $3 \chi (X)$.
\end{theorem}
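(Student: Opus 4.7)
The plan is to compute the index by passing to the principal symbol and reducing the resulting operator to the familiar Atiyah--Hitchin--Singer-type operators of the $t=0$ case via a linear change of variables. Since the index of an elliptic complex is invariant under compact (in particular zeroth-order) perturbations, I would first discard all zeroth-order terms such as $[\mathfrak{a},\cdot\,]$ from the differentials in \eqref{eq:AHStn0}. I would then roll up the complex into the single elliptic operator
$$L_{A, t \neq 0}: \Gamma(\mathrm{ad}(P) \otimes (\Lambda^1_X \oplus \Lambda^1_X)) \to \Gamma(\mathrm{ad}(P) \otimes (\Lambda^0_X \oplus \Lambda^2_X \oplus \Lambda^0_X))$$
obtained by adjoining the formal adjoint $d_A^* a$ of the first differential --- this is the gauge-fixing component --- to the second differential. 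Its index equals the negative of the index of the complex \eqref{eq:AHStn0}.

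Next, using the splitting $\Lambda^2_X = \Lambda^+_X \oplus \Lambda^-_X$ of the middle codomain summand, I would perform the linear change of variables $u = a + \alpha$, $v = a - \alpha$, chosen so that the self-dual part of the $\Lambda^2_X$ equation becomes $d_A^+ u$ while the anti-self-dual part becomes $d_A^- v$. A corresponding invertible recombination of the two $\Lambda^0_X$-valued target components $d_A^* a$ and $d_A^* \alpha$ separates the $d_A^*$ pieces onto $u$ and $v$ individually. Under this substitution the operator $L_{A, t \neq 0}$ transforms into the direct sum $L^+ \oplus L^-$ of the two standard first-order elliptic operators $L^\pm(w) = (d_A^\pm w, d_A^* w)$ already employed in the $t=0$ calculation of the previous subsection.

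Invoking the known indices $\mathrm{ind}(L^\pm) = 8 k(G) - 3(1 - b^1 + b^\pm)$ and summing gives $\mathrm{ind}(L_{A, t \neq 0}) = 16 k(G) - 3(2 - 2b^1 + b^2)$. By Poincar\'e duality on $X$, we have $2 - 2b^1 + b^2 = 1 - b^1 + b^2 - b^3 + b^4 = \chi(X)$, hence $\mathrm{ind}(L_{A, t \neq 0}) = 16 k(G) - 3\chi(X)$. The index of the complex \eqref{eq:AHStn0} is minus this, namely $-16 k(G) + 3\chi(X)$, which reduces to $3\chi(X)$ under the hypothesis $k(G) = 0$.

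The main obstacle lies in the second step: verifying that the $\Lambda^2_X$-valued expression $d_A a + d_A \alpha$ appearing in the middle component of $D_{A, t \neq 0}$ in \eqref{eq:AHStn0} really produces the clean structure $d_A^+(a+\alpha)$ in the self-dual projection and $d_A^-(a-\alpha)$ in the anti-self-dual projection, so that the change of variables diagonalizes the system. This requires correctly parsing the relationship between the Gagliardo--Uhlenbeck reformulation \eqref{eq:tnzKW} of the equations and the original $t$-parametrized Kapustin--Witten equations \eqref{eq:pKW1}--\eqref{eq:pKW3}; once this identification is made, the remainder of the computation is a direct application of the Atiyah--Hitchin--Singer formula for $L^\pm$ followed by the Poincar\'e-duality simplification above.
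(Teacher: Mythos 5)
Your overall route is genuinely different from the paper's, and the final bookkeeping (rolling up the complex with the gauge-fixing row, index of the operator equal to minus the index of the complex, $\mathrm{ind}\,L^{\pm}$ summed and simplified by Poincar\'e duality) is fine. The paper instead identifies the second $\Lambda^1_X$ summand with $\Lambda^3_X$ and the second $\Lambda^0_X$ target summand with $\Lambda^4_X$ via the Hodge star, assembles the whole system into the single twisted de Rham operator \eqref{eq:EOtdr} from odd to even forms, and evaluates its index by the Atiyah--Singer formula, where the Gauss--Bonnet-type symbol factor makes the answer $3\chi(X)$ essentially because it is (rank of the coefficient bundle)$\,\times\,\chi(X)$. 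Your decomposition into the two instanton-type operators \eqref{eq:EOp}, \eqref{eq:EOm} reuses the $t=0$ computation instead, which is economical, but it hides the structural point that the $t\neq0$ operator is a twisted Euler operator.

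However, the step you yourself flag as the ``main obstacle'' is a genuine gap, and as you have set it up it fails. If the middle row of \eqref{eq:AHStn0} is read literally as $d_A a + d_A \alpha$, then \emph{both} chiral projections involve only $u = a+\alpha$: one gets $d_A^+(a+\alpha)$ and $d_A^-(a+\alpha)$, the variable $v = a - \alpha$ never appears, and the rolled-up operator is not even elliptic --- its symbol annihilates every pair $(a,\alpha) = (a,-a)$ with $\iota_\xi a = 0$, a three-dimensional kernel. The repair is precisely the Hodge-star identification you did not carry out: the $\Lambda^2_X$-row is the linearisation of the single equation $F_A - [\mathfrak{a}\wedge\mathfrak{a}] + * \, d_A \mathfrak{a} = 0$, which is how \eqref{eq:pKW1} and \eqref{eq:pKW2} at $t=1$ combine (add the self-dual equation, as a two-form, to the anti-self-dual one), equivalently the content of the Gagliardo--Uhlenbeck reformulation \eqref{eq:tnzKW} packaged with $\alpha$ viewed as $*\alpha \in \Lambda^3_X$. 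Its symbol is $\xi\wedge a + *(\xi\wedge\alpha)$, and since $*$ acts as $+1$ on $\Lambda^+_X$ and $-1$ on $\Lambda^-_X$, the self-dual part is $(\xi\wedge(a+\alpha))^+$ while the anti-self-dual part is $(\xi\wedge(a-\alpha))^-$; only then does your change of variables, together with the invertible recombination of the two $\Lambda^0_X$ outputs, diagonalise the system into $L^+ \oplus L^-$. One further caveat: $L^-$ is the operator $L^+$ for the reversed orientation, under which the characteristic number $k(G)$ changes sign, so its index carries $-8k(G)$ rather than $+8k(G)$; under the hypothesis $k(G)=0$ this is invisible and your conclusion $3\chi(X)$ stands, and it is exactly what reconciles your $L^+\oplus L^-$ computation with the twisting-independence of the de Rham operator \eqref{eq:EOtdr} used in the paper's proof.
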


\begin{proof}
We ignore the zero-th order terms in \eqref{eq:AHStn0} here as well. 
A trick here is to identify the second summand $\Lambda^1_X$ in the second term of \eqref{eq:AHStn0} with $\Lambda^3_X$ and the second summand $\Lambda^0_X$ in the third term of \eqref{eq:AHStn0} with $\Lambda^4_X$ by the Hodge star operator on $X$. 
Then by holding them into the odd degree part and the even degree part, the index calculation of \eqref{eq:AHStn0} reduces to that of the following twisted de Rham operator: 
\begin{equation}
L_{A, t \neq 0} : \Gamma ( \text{ad} (P) \otimes ( \Lambda^1_X \oplus \Lambda^3_X) )   \\
\longrightarrow  \Gamma ( \text{ad} (P) \otimes ( \Lambda^0_X \oplus \Lambda^2_X \oplus \Lambda^4_X ))   . 
\label{eq:EOtdr}
\end{equation}
This operator is defined by $L_{A, t \neq 0} (a, \alpha' ) := ( d_A^{*}  a , d_A a + d_A^{*} \alpha' , d_A \alpha')$ for $ (a, \alpha' ) \in \Gamma ( \text{ad} (P) \otimes ( \Lambda^1_X \oplus \Lambda^3_X) )$. 
The topological index of the elliptic operator \eqref{eq:EOtdr} is given by 
$$ 
- \frac{\text{ch} ( \text{ad} (P) \otimes \C) \, \text{ch} (\sum_{i=0}^{4} (-1)^i \Lambda^i_{TX \otimes \C}) \, \text{td} (TX \otimes \C)}{e(TX)} [X], 
$$
where $\text{ad} ( P ) \otimes \C := P \times_{ad} (\mathfrak{g} \otimes \C)$ 
(See e.g. Shanahan \cite[I]{Shan}  for more details).     
From the assumption, we have $\text{ch} ( \text{ad} (P) \otimes \C) = \dim \mathfrak{g} =3$. 
In addition, the remaining factor 
$ - \frac{\text{ch} (\sum_{i=0}^{4} (-1)^i \Lambda^i_{TX \otimes \C}) \, \text{td} (TX \otimes \C)}{e(TX)} [X]$   
is $-1$ times the toplogical index for the de Rham complex on $X$ (see \cite[II.4]{Shan}), hence it equals $-1$ times the Euler number of $X$. 
Thus the assertion holds. 
\end{proof}

We immediately obtain the following:

\begin{Corollary}
Consider a principal $G$-bundle over a closed, oriented, smooth four-manifold $X$, where $G$ is one of $SU(2)$ or $SO(3)$.  Assume that $k(G)=0$. 
Then the indices of \eqref{eq:AHSt0} and \eqref{eq:AHStn0} coincide. 
In other words, so do the expected dimensions of the moduli spaces of solutions to the Kapustin--Witten equations for $t=0$ and $t \neq0$ over $X$. 
\label{cor:vd}
\end{Corollary}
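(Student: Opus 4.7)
The plan is to read off the Corollary as a direct numerical consequence of the two index computations established immediately above it. By the first theorem of the subsection, the index of the $t=0$ deformation complex \eqref{eq:AHSt0} equals $-16 k(G) + 3\chi(X)$; by the second theorem, the index of the $t\neq 0$ deformation complex \eqref{eq:AHStn0} equals $3\chi(X)$, and that computation already uses $k(G)=0$ in order to make $\text{ch}(\text{ad}(P)\otimes \C)$ reduce to the constant $\dim \mathfrak{g} = 3$. Under the hypothesis $k(G)=0$ the first formula collapses to $3\chi(X)$ as well, so the two indices agree. The essential content of the corollary is therefore just the observation that the $k(G)$-dependent contribution in the $t=0$ index is what gets killed by the flatness assumption, while the $t\neq 0$ side is already purely topological in $X$ thanks to the cancellation arising from the twisted de Rham reformulation.

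To pass from equality of indices to equality of expected dimensions, I would invoke the standard Atiyah--Hitchin--Singer slogan that, at a smooth unobstructed point of the moduli space, the zeroth and second cohomology of the deformation complex vanish and the Zariski tangent space is identified with the first cohomology. Consequently the expected dimension equals $-\chi$ of the complex (i.e.\ $-1$ times its index), and so the two expected dimensions agree whenever the two indices do.

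I do not anticipate a genuine obstacle here: once the two theorems have been proved, the corollary is arithmetic. The only point that deserves a line of care is that both index computations were carried out after ignoring zeroth-order terms in the symbol (replacing the twisted operators by their principal symbols), which is harmless because the index of an elliptic complex depends only on its symbol; this is already implicit in the proofs of the two theorems and need not be revisited. Thus the proof reduces to writing $-16\cdot 0 + 3\chi(X) = 3\chi(X)$ and citing the identification of expected dimension with $-\mathrm{ind}$.
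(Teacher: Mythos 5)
Your proposal is correct and matches the paper exactly: the corollary is stated there as an immediate consequence of the two index theorems, amounting to substituting $k(G)=0$ into $-16k(G)+3\chi(X)$ and comparing with $3\chi(X)$, with the expected dimension identified as $-1$ times the index of the deformation complex, just as you argue. Your remarks on dropping zeroth-order terms and on the $t\neq 0$ computation already assuming $k(G)=0$ accurately reflect how the paper's proofs are organized, so nothing needs revisiting.
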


\section*{Acknowledgements} C.-C. L. was partially supported by Ministry of Science and Technology of Taiwan under grant number 109-2115-M-006-011. S. R. was partially supported by an NSERC Discovery Grant. 
Y. T. was partially supported by the Simons Collaboration on Special Holonomy in Geometry, Analysis and Physics and JSPS Grant-in-Aid for Scientific Research number JP16K05125 during the preparation of this manuscript.  The authors thank Christopher Beem, Laura Fredrickson, Sergei Gukov, Hiroshi Iritani, Rafe Mazzeo, Takuro Mochizuki, \'Akos Nagy, Hiraku Nakajima, and Sakura Sch\"afer-Nameki for helpful conversations.  S. R. is grateful to Andrew Dancer and Frances Kirwan for their hospitality and enlightening discussions during a November 2018 visit to the Oxford Mathematical Institute, where the second- and third-named authors initiated this project. Y. T. is grateful to Hiraku Nakajima for the support and hospitality at Kavli IPMU in Autumn 2020. S. R. and Y. T. thank the Mathematisches Forschungsinstitut Oberwolfach (MFO) and the organizers of the May 2019 Workshop on Geometry and Physics of Higgs Bundles (Lara Anderson, Tam\'as Hausel, Rafe Mazzeo, and Laura Schaposnik) for a stimulating environment in which some formative ideas related to this project were discussed. 


\medskip


\end{document}